\def\BBox{\kern  -0.2cm\hbox{\vrule width 0.2cm height 0.2cm}}
\def\po{\mathcal{P}}
\renewcommand{\phi}{\varphi}
\def\infinite{\mathcal{1}}
\def\fl{\mathcal{F}}
\def\Mon{\mathrm{Mon}}
\newtheorem{lemma}{Lemma}[section]
\newtheorem{theorem}{Theorem}[section]
\newtheorem{propo}{Proposition}[section]
\newtheorem{remark}[theorem]{Remark}
\title{Colorful Polytopes and Graphs}
\author{
Gabriela Araujo-Pardo\!\!
\thanks{garaujo@matem.unam.mx, Supported by CONACYT 57371 and by PAPIIT-M\'exico under project 104609-3.},\\ 
Isabel Hubard\!\!
\thanks{hubard@matem.unam.mx, Supported by SMM-Fundaci\'on Sof\'ia Kovalevskaia and by PAPIIT-M\'exico under project IN106811}, \\
Deborah Oliveros\!\!
\thanks{dolivero@matem.unam.mx, Supported by PAPIIT-M\'exico under project 104609-3.} \\
{\small  Instituto de Matem\'{a}ticas}\\
{\small  Universidad Nacional Aut\'{o}noma de M\'{e}xico, M\'{e}xico}
\\[1ex]
{\small and}\\[1ex]
Egon Schulte\!\!
\thanks{schulte@neu.edu, Supported by NSF-grant DMS-0856675 and by PAPIIT-M\'exico under project 104609-3.}\\
{\small Department of Mathematics}\\
{\small Northeastern University, Boston, USA} }
\begin{document}
\maketitle

\begin{center}
{\em With best wishes for our friend and colleague, Luis Montejano}
\end{center}
\smallskip

%%%%%%%%%%%%%%%%%%%%%%%%%%%%%%%%%%%%%%%%%%%%%%
\begin{abstract}
The paper investigates connections between abstract polytopes and properly edge colored graphs. Given any finite $n$-edge-colored $n$-regular graph $\mathcal{G}$, we associate to $\mathcal{G}$ a simple abstract polytope $\mathcal{P}_{\mathcal{G}}$ of rank $n$, the {\em colorful polytope\/} of $\mathcal{G}$, with $1$-skeleton isomorphic to $\mathcal{G}$. We investigate the interplay between the geometric, combinatorial, or algebraic properties of the polytope $\mathcal{P}_{\mathcal{G}}$ and the combinatorial or algebraic structure of the underlying graph $\mathcal{G}$, focussing in particular on aspects of symmetry. Several such families of colorful polytopes are studied including examples derived from a Cayley graph, in particular the graphicahedra, as well as the flag adjacency polytopes and related monodromy polytopes associated with a given abstract polytope. The duals of certain families of colorful polytopes have been important in the topological study of colored triangulations and crystallization of manifolds.
\end{abstract}
%%%%%%%%%%%%%%%%%%%%%%%%%%%%%%%%%%%%%%%%%%%%%%
\noindent
{\bf Key words.} ~ Edge chromatic number, Graphicahedron, Abstract Polytope, Cayley Graph.

\noindent
{\bf MSC 2000.} ~ Primary: 51M20.  Secondary: 05C25, 52B15.

%%%%%%%%%%%%%%%%%%%%%%%%%%%%%%%%%%%%%%%%%%%%%%%%%%%%%%

\section{Introduction}
\label{intro}

The present paper studies interesting connections between abstract polytopes and edge-colored graphs. Every properly edge-colored regular graph naturally gives rise to a simple abstract polytope, called its {\em colorful polytope\/}, built by following precise instructions encoded in the graph. We investigate several such families of polytopes and the interplay between their geometric, combinatorial, or algebraic properties and the combinatorial or algebraic structure of the underlying graph.

The key idea of associating a combinatorial structure or a topological space with a properly edge-colored regular graph is not new and has been successfully exploited, usually in the dual setting, in the topological study of colored triangulations and crystallization of manifolds (see Bracho \& Montejano~\cite{bracho}, Ferri, Gagliardi \& Graselli~\cite{fgg}, Pezzana~\cite{pez}, Lins \& Mandel~\cite{lima}, Vince~\cite{vin83a,vin83b}, and K\"uhnel~\cite{kuhn}). Here we take a polytopes approach and investigate these structures as simple abstract polytopes rather than simplicial complexes or triangulations, focussing on their geometric, combinatorial, and algebraic symmetry rather than their topology. 

The paper is organized as follows. In Section~\ref{secbasnot} we review basic concepts for edge colorings of graphs and for abstract polytopes. Then, in Sections~\ref{polsfromgraphs} and~\ref{autgrcol}, we establish that the combinatorial structure derived from a properly edge-colored regular graph is an abstract polytope with $1$-skeleton isomorphic to the given graph, and determine its automorphism group. In Section~\ref{flagpo} we describe a colorful polytope, the {\em flag adjacency polytope\/}, associated with the flag graph of a given abstract polytope; this flag graph is a combinatorial map in the sense~\cite{vin83a,vin83b}. Then, in Section~\ref{colcay}, colorful polytopes of Cayley graphs are investigated. In particular, we revisit the graphicahedron explored in \cite{graph,symgra}. Finally, Section~\ref{mono} explains how the flag-adjacency polytope of a given polytope is related to another colorful polytope, the {\em monodromy polytope\/}, of the monodromy group of the given polytope (see Hartley~\cite{hartley} and Hubard, Orbanic \& Weiss~\cite{d-auto}).

\section{Basic notions}
\label{secbasnot}

We begin by briefly reviewing some of the terminology for graphs and abstract polytopes.  For further basic definitions and terminology see Chartrand \& Lesniak~\cite{CL96} and McMullen \& Schulte~\cite{McMS02}. 

Let $\mathcal{G}$ be a finite connected simple graph, without loops or multiple edges and with vertex set $V$ and edge set $E$.  An {\em edge coloring} of $\mathcal{G}$ is an assignment of colors to the edges of $\mathcal{G}$ such that adjacent edges are colored differently. More precisely, if $R$ is a set of $n$ {\em colors\/}, then an {\em $n$-edge coloring} of $\mathcal{G}$ with {\em color set\/} $R$ is a mapping $c: E\rightarrow R$ such that $c(e)\neq c(f)$ if $e$ and $f$ are adjacent edges of $\mathcal{G}$. Then the pair $(\mathcal{G},c)$, usually simply denoted by $\mathcal{G}$, is called an {\em $n$-edge colored graph\/}. The graph $\mathcal{G}$ is {\em $n$-edge colorable\/} if there exists an $n$-edge coloring of $\mathcal{G}$. The minimum number $n$ for which $\mathcal{G}$ is $n$-edge colorable is called its {\em edge chromatic number}, or {\em chromatic index}, in the literature usually denoted by $\chi _{1}(\mathcal{G})$. 

Let $\Delta(\mathcal{G})$ denote the maximum degree among the vertices of $\mathcal{G}$. Clearly, $\Delta (\mathcal{G})\leq\chi _{1}(\mathcal{G})$. By a well-known result of Vizing~\cite{Vizing}, we also have $\chi _{1}(\mathcal{G})\leq \Delta (\mathcal{G})+1$. Thus there are two kinds of graphs, those with $\chi _{1}(\mathcal{G})=\Delta (\mathcal{G})$, here said to be of {\em type~1\/}, and those with $\chi _{1}(\mathcal{G})=\Delta (\mathcal{G})+1$, here said to be of {\em type~2}. It was proved in Erd\"os \& Wilson~\cite{ErdosWilson} that the probability for a graph on $p$ vertices to be of type~1 approaches 1 as $p$ approaches $\infty$. Hence in general there are considerably more graphs of type 1 than graphs of type 2. However, the problem of determining which graphs are of which type is unsolved.

The edge chromatic number of an $r$-regular graph $\mathcal{G}$ is $r$ or $r+1$. If $\mathcal{G}$ is of type 1, then any $r$-edge coloring determines $r$ edge-disjoint $1$-factors (perfect matchings) of $\mathcal{G}$, each given by one color class of edges; in fact, each vertex of $\mathcal{G}$ lies in exactly one edge of each color. Conversely, it is easy to see that a connected $r$-regular graph whose edge set can be partitioned into $r$\ $1$-factors (that is, a {\em 1-factorable graph\/}) is of type 1. Thus a connected $r$-regular graph is of type 1 if and only if it is 1-factorable. 

Our main interest is in connected $r$-regular graphs $\mathcal{G}=(\mathcal{G},c)$ of type $1$, with vertex set $V$, edge set $E$, and an $r$-edge coloring map $c: E\rightarrow R$ with color set $R$. We refer to these as {\em well} ({\em edge\/}) {\em colored $r$-regular graphs\/}, or simply {\em properly (edge) colored graphs}. We sometimes abuse notation and also denote by $\mathcal{G}$ the underlying ``uncolored" graph of a properly (edge) colored graph. The complete graphs $K_p$ with an even number $p$ of vertices, the $p$-cycles $C_p$ with $p$ even and $p\neq 2$, and the regular bipartite graphs such as the Heawood graph, and some other  nice examples like the Coxeter graph, are all examples of properly (edge) colored graphs (see Figure~\ref{Heawood}). Moreover, every planar connected $r$-regular graph with $r\geq 10$ is a properly (edge) colored graph (see \cite{Vizing2}). 

\begin{figure}[htbp]
\begin{center}
\vspace{-15cm}
\hspace{-3cm}
\includegraphics[width=18cm]{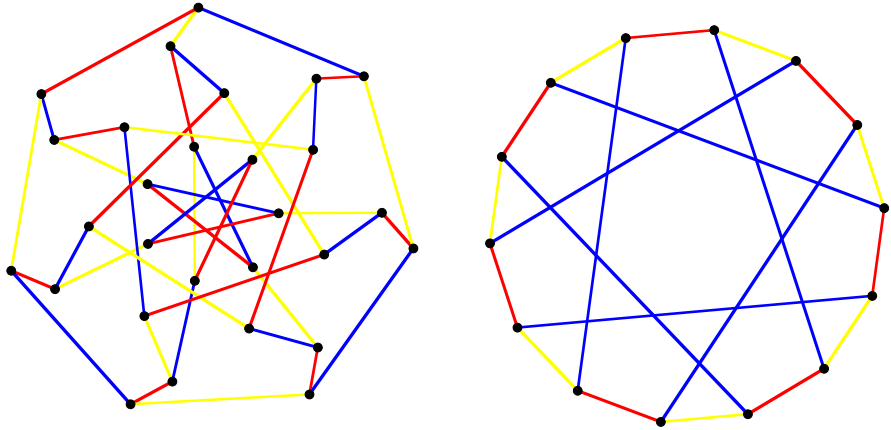} 
\vspace{-2cm}
\caption{Edge colorings of the Coxeter graph and the Heawood graph, each determining a 3-polytope of type $\{14,3\}$.}
\label{Heawood}
\end{center}
\end{figure}

For a properly (edge) colored $r$-regular graph $\mathcal{G}$ we let $\Gamma(\mathcal{G})$ denote the graph automorphism group of the underlying ``uncolored'' graph, that is, the subgroup of permutations of the vertex set $V$ that preserve the edge set $E$. There are two subgroups of $\Gamma(\mathcal{G})$ that are associated with the $r$-edge coloring map  $c: E\rightarrow R$ and are of particular interest for us. They are obtained as follows.

We say that an automorphism $\gamma$ in $\Gamma(\mathcal{G})$ is {\em color preserving\/} if $\gamma$ maps every edge of $\mathcal{G}$ to an edge with the same color; that is, $c(\gamma (e))=c(e)$ for each $e$ in $E$. On the other hand, we say that an automorphism $\gamma$ in $\Gamma(\mathcal{G})$ is {\em color respecting\/} if 
any two edges of $\mathcal{G}$ with the same color are mapped to edges that also have the same color (which may be distinct from the first color); that is, $c(\gamma(e))=c(\gamma(e^{\prime}))$ whenever $e,e^{\prime}$ are edges in $E$ with $c(e)=c(e^{\prime})$. Naturally we now obtain two special subgroups of $\Gamma(\mathcal{G})$, namely 
the subgroup $\Gamma_p(\mathcal{G})$ consisting of all color preserving automorphisms of $\mathcal{G}$, and the subgroup $\Gamma_{c}(\mathcal{G})$ consisting of all color respecting automorphisms of $\mathcal{G}$. Clearly every color preserving automorphism is also color respecting, so $\Gamma_p(\mathcal{G})$ is a (generally proper) subgroup of $\Gamma_c(\mathcal{G})$.
\medskip
 
In this paper we construct abstract polytopes from properly (edge) colored graphs. An ({\em abstract\/}) {\em polytope of rank\/} $n$, or simply an {\em $n$-polytope}, is a partially ordered set $\mathcal{P}$ with a strictly monotone rank function with range $\{-1,0,\ldots,n\}$ satisfying the following conditions. The elements of rank $j$ are called the {\em $j$-faces\/} of $\mathcal{P}$, or {\em vertices}, {\em edges\/} and {\em facets\/} of $\mathcal{P}$ if $j=0$, $1$ or $n-1$, respectively. Each {\em flag\/} (maximal totally ordered subset) of $\mathcal{P}$ contains exactly $n+2$ faces, including a unique minimal face $F_{-1}$ (of rank $-1$) and a unique maximal face $F_{n}$ (of rank $n$). Further, we ask that $\mathcal{P}$ be {\em strongly flag-connected\/}, meaning that any two flags $\Phi$ and $\Psi$ of $\mathcal{P}$ can be joined by a sequence of flags $\Phi=\Phi_{0},\Phi_{1},\ldots,\Phi_{l-1},\Phi_{l}=\Psi$, all containing $\Phi\cap\Psi$, such that $\Phi_{i-1}$ and $\Phi_{i}$ are {\em adjacent\/} (differ by exactly one face) for each $i$. Finally, $\po$ satisfies the {\em diamond condition\/}, namely if $F$ is a $(j-1)$-face and $G$ a $(j+1)$-face with $F<G$, then there are exactly two $j$-faces $H$ such that $F<H<G$.

When $F$ and $G$ are two faces of a polytope $\mathcal{P}$ with $F \leq G$, we call $G/F := \{H \mid F \leq H \leq G\}$ a {\em section\/} of $\mathcal{P}$. We usually identify a face $F$ with the section $F/F_{-1}$. For a face $F$ the section $F_{n}/F$ is called the {\em co-face of $\mathcal{P}$ at\/} $F$, or the {\em vertex-figure at\/} $F$ if $F$ is a vertex.  An abstract polytope is {\em simple\/} if all its vertex-figures are simplices. 

An {\em automorphism\/} of a polytope $\po$ is a bijection of the set of faces of $\po$ that preserves the order. The ({\em combinatorial\/}) {\em automorphism group\/} of an abstract polytope $\mathcal{P}$ will be denoted by $\Gamma(\mathcal{P})$.  

\section{Polytopes from properly (edge) colored graphs}
\label{polsfromgraphs}

As before, let $\mathcal{G}$ denote a properly (edge) colored $r$-regular graph with an $r$-edge coloring map $c: E\rightarrow R$ with color set $R$ of cardinality $r$. The main goal of this section is to construct a simple abstract polytope $\mathcal{P}_{\mathcal{G}}$ of rank $r$ from $\mathcal{G}$ in such a way that $\mathcal{G}$ is the $1$-skeleton (edge graph) of $\mathcal{P}_{\mathcal{G}}$. These polytopes are essentially the duals of the colored simplicial complexes or triangulations studied in Bracho \& Montejano~\cite{bracho}.

We begin by defining the face set and the partial order of $\mathcal{P}_{\mathcal{G}}$, and then establish that this  determines an abstract polytope of rank $r$. To begin with, we require the following relations $\sim_C$ on $V$, with $C\subseteq R$, which are easily seen to be equivalence relations. Two vertices $v$ and $w$ in $V$ are said to be {\em $C$-equivalent\/}, or $v\sim_{C} w$ for short, if there exists an edge path of $\mathcal{G}$ from $v$ to $w$ whose edges are colored with colors from $C$. The face structure of the polytope $\mathcal{P}_{\mathcal{G}}$ can now be described as follows.
 
In describing the face structure of $\mathcal{P}_{\mathcal{G}}$. It is convenient to initially suppress the face of rank $-1$ and concentrate entirely on faces of non-negative ranks; the missing $(-1)$-face will be appended at the end.

Accordingly, we define the faces of rank $j=0,1,\ldots,r$ by saying that the vertex set of a typical $j$-face $F$ of $\mathcal{P}_{\mathcal{G}}$ consists of all those vertices of $\mathcal{G}$ that are connected to a given vertex $v$ of $\mathcal{G}$ by an edge path using only colors from a given set $C$ of $j$ colors; that is, the vertex set of $F$ consists of those vertices of $\mathcal{G}$ that are equivalent to $v$ under $\sim_C$. When $C=\emptyset$ this determines the vertices of $\mathcal{P}_\mathcal{G}$ and establishes $V$ as the full vertex set of $\mathcal{P}_\mathcal{G}$; and when $C=R$ we obtain the unique $r$-face $F_r$ (with vertex set $V$).

Thus, a typical $j$-face $F$ of $\mathcal{P}_{\mathcal{G}}$ can be represented as a pair $(C,v)$, where $v$ is a vertex of $\mathcal{G}$ and $C$ is a $j$-subset of $R$. We then write $F=(C,v)$. It will become clear that, once the partial order on the faces of $\mathcal{P}_\mathcal{G}$ has been defined, the vertex set of a face $F$ coincides precisely with the $0$-faces of $\mathcal{P}_\mathcal{G}$ incident to $F$. In general, a face of $\mathcal{P}_\mathcal{G}$ can have many possible designations. In fact, $(C,v)=(D,w)$ if and only $C=D$ and $v$ can be obtained from $w$ (and hence $w$ from $v$) by a path of edges involving only colors from $C$. 

Given a $j$-face $F=(C,v)$ and a $k$-face $G=(D,w)$ of $\mathcal{P}_{\mathcal{G}}$ as above, we say that $F\leq G$ if and only if the vertex set of $F$ is contained in the vertex set of $G$; that is, $F\leq G$ if and only if $C\subseteq D$ (and hence $j\leq k$) and $v$ can be obtained from $w$ by a path involving only edges with colors from $D$. This is consistent with our definition of equality of faces. 

Proceeding with the general discussion, we can immediately make one observation. If $F=(C,v)$ and $G=(D,w)$ are two faces of $\mathcal{P}_{\mathcal{G}}$ of non-negative rank with $F\leq G$, then necessarily $G=(D,v)$; in other words, in designating the larger face we may replace $w$ by $v$. This immediately follows from the definition of the partial order, since $F\leq G$ implies that $v$ can be obtained from $w$ by moving along a path involving only edges with colors from $D$. As an important consequence, any chain of mutually incident faces of $\mathcal{P}_{\mathcal{G}}$ of non-negative rank can be represented in such a way that their second components all coincide with the second component of the smallest face in this chain. In particular, this applies to representations of flags (for now, flags do not contain the $(-1)$-face).

We claim that $\mathcal{P}_{\mathcal{G}}$, partially ordered as described, is an abstract polytope of rank $r$. Clearly, the rank of $\mathcal{P}_{\mathcal{G}}$ is $r$. In fact, every flag is of the form $\Phi:=\{F_{0},F_{1},\ldots,F_r\}$, where $F_{j}=(C_{j},v)$ is a $j$-face for each $j=0,\ldots,r$, with $C_{0}\subset C_{1}\subset \ldots \ldots \subset C_{r}=R$ and $v$ independent of $j$. 

A {\em maximal nested\/} family of subsets of a given finite set is a flag in the Boolean lattice associated with this set. (We use this terminology to avoid confusion with the flags of $\mathcal{P}_{\mathcal{G}}$.) Our previous considerations show that each flag $\Phi$ of $\mathcal{P}_{\mathcal{G}}$ is determined by two parameters, a single vertex $v$ of $\mathcal{G}$ and a maximal nested family $\mathcal{C}:= \{C_{0},C_{1},\ldots,C_{r}\}$ of subsets of the color set $R$. We then write 
\[ \Phi = (\mathcal{C},v) := \{(C_{0},v),(C_{1},v),\ldots,(C_{r},v)\} . \]

It is also immediate that each flag $\Phi=(\mathcal{C},v)$ has exactly one {\em $j$-adjacent\/} flag $\Phi^{j}$ for each $j=0,\ldots,r-1$, differing from $\Phi$ in just the $j$-face. When $j\geq 1$, any flag $j$-adjacent to $\Phi$ can be represented with the same second parameter, $v$, and the same color sets except for the color set at position $j$. Since $C_{j-1}$ is a $(j-1)$-subset of the $(j+1)$-subset $C_{j+1}$, there are exactly two $j$-subsets $C$ such that $C_{j-1}\subset C \subset C_{j+1}$, one given by $C_j$ itself. Hence, there is just one flag $j$-adjacent to $\Phi$, namely the flag determined by the other $j$-subset $C$. This settles the case $j\geq 1$. Now when $j=0$, suppose that $w$ is the vertex of $\mathcal{G}$ adjacent to $v$ by the edge colored with the single color from $C_1$. Then $(\mathcal{C},w)$ is a flag $0$-adjacent to $\Phi$, and this is the only such flag.

It remains to establish the strong flag-connectedness of $\mathcal{P}_{\mathcal{G}}$. Here it is helpful to know the structure of the vertex-figures of $\mathcal{P}_{\mathcal{G}}$. Now, if $(\emptyset,v)$ is any vertex of $\mathcal{P}_{\mathcal{G}}$ and $(C,v)$ and $(D,v)$ are two faces incident with $(\emptyset,v)$, then $(C,v)\leq (D,v)$ in $\mathcal{P}_{\mathcal{G}}$ if and only if $C\subseteq D$. Hence, the vertex-figure of $\mathcal{P}_{\mathcal{G}}$ at $(\emptyset,v)$ is isomorphic to the $(r-1)$-simplex, or equivalently, the Boolean lattice on the color set $R$. Thus, all vertex-figures of $\mathcal{P}_{\mathcal{G}}$ are $(r-1)$-simplices and in particular are flag-connected.

\begin{lemma}
\label{flagconn}
$\mathcal{P}_{\mathcal{G}}$ is strongly flag-connected. 
\end{lemma}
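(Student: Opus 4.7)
The plan is to prove strong flag-connectedness by first establishing flag-connectedness of $\mathcal{P}_{\mathcal{G}}$ directly, and then promoting this to strong flag-connectedness by analysing the sections of $\mathcal{P}_{\mathcal{G}}$ and invoking induction on the rank $r$.

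For flag-connectedness, fix two flags $\Phi=(\mathcal{C},v)$ and $\Psi=(\mathcal{D},w)$. Since $\mathcal{G}$ is connected, we choose an edge path $v=v_{0},v_{1},\ldots,v_{k}=w$ in $\mathcal{G}$ and let $\alpha_{i}$ be the color of $\{v_{i-1},v_{i}\}$. I would build a sequence of adjacent flags from $\Phi$ to $\Psi$ by iterating two kinds of moves. The rank-$0$ move advances the second component from $v_{i-1}$ to $v_{i}$ and requires the current nested family to satisfy $C_{1}=\{\alpha_{i}\}$. A sequence of rank-$\geq 1$ adjacencies leaves the vertex fixed and transforms the nested family into any prescribed one, in particular the one needed before each rank-$0$ move, and finally into $\mathcal{D}$ once at $w$. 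That the rank-$\geq 1$ adjacencies suffice to pass between any two maximal nested families on $R$ is the classical fact that the set of maximal chains of the Boolean lattice on $R$—equivalently, the chamber system of the $(r-1)$-simplex—is connected under elementary swaps.

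For strong flag-connectedness, I would rely on the standard characterization that a poset satisfying the other polytope axioms is strongly flag-connected if and only if each of its sections is flag-connected. The sections $G/F$ of $\mathcal{P}_{\mathcal{G}}$ split into two types. When both $F=(C,v)$ and $G=(D,v)$ have non-negative rank, the section is exactly $\{(E,v):C\subseteq E\subseteq D\}$ ordered by containment of the first component, which is a Boolean lattice, hence an abstract simplex, hence trivially flag-connected. When $F=F_{-1}$ and $G=(C,v)$ has positive rank $j<r$, the section $G/F_{-1}$ is isomorphic as a poset to the colorful polytope of the subgraph of $\mathcal{G}$ induced on the $\sim_{C}$-equivalence class of $v$, equipped with the restricted edge coloring using only the colors in $C$. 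That induced subgraph is $j$-regular, since every vertex of $\mathcal{G}$ meets exactly one edge of each color from $C$ and these edges stay inside the $\sim_{C}$-class, and it is properly edge colored; so by induction on rank this lower-rank colorful polytope is strongly flag-connected, in particular flag-connected. Together with the flag-connectedness of $\mathcal{P}_{\mathcal{G}}$ itself, this yields strong flag-connectedness.

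The main technical obstacle is verifying cleanly the poset isomorphism between the section $G/F_{-1}$ and the colorful polytope of the induced subgraph under the restricted coloring. This reduces to matching the identifications $(E,v')=(E,v'')\Leftrightarrow v'\sim_{E}v''$ in the two settings and observing that $\sim_{E}$-paths using colors from $E\subseteq C$ stay inside the $\sim_{C}$-equivalence class of $v$, so the ambient polytope and the restricted one give the same equalities and the same partial order. The base case $r=1$ is immediate (the graph is a single edge, and $\mathcal{P}_{\mathcal{G}}$ is a rank-$1$ polytope), and the connectedness of the chamber system of the simplex and the equivalence between strong flag-connectedness and flag-connectedness of all sections are classical (see~\cite{McMS02}).
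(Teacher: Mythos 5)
Your proof is correct, but it takes a genuinely different route from the paper's. The paper verifies the strong form directly: given flags $\Phi=(\mathcal{C},v)$ and $\Psi=(\mathcal{D},w)$, it lets $m$ be the smallest rank at which the two flags share a face, takes an edge path from $v$ to $w$ using only colors from $C_m$, and inducts on the length of that path, at each step moving through a vertex-figure (an $(r-1)$-simplex) and checking by hand that every intermediate flag contains $\Phi\cap\Psi$. You instead prove plain flag-connectedness of $\mathcal{P}_{\mathcal{G}}$ by walking along $\mathcal{G}$ and reshuffling the nested family of color sets in between steps, and then invoke the classical equivalence (Proposition 2A1 of \cite{McMS02}) between strong flag-connectedness and flag-connectedness of every section: sections $G/F$ with $F$ of non-negative rank are Boolean intervals $[C,D]$, hence simplices, while sections $G/F_{-1}$ with $G=(C,v)$ of rank $j<r$ are colorful polytopes of rank $j$, handled by induction on the rank. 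Your route is more modular, and as a byproduct it identifies \emph{every} face of $\mathcal{P}_{\mathcal{G}}$ (not only every facet) as a colorful polytope of a color-restricted subgraph --- a structural fact the paper establishes separately for facets immediately after the lemma; the cost is reliance on the section-wise characterization of strong flag-connectedness, which the paper's self-contained induction avoids. Two small points of precision: the subgraph you describe should be the spanning subgraph of the $\sim_{C}$-class of $v$ formed by the edges with colors in $C$ (the literal induced subgraph may contain edges whose colors lie outside $C$, as happens already for $K_4$); and the poset identification you single out as the main technical obstacle is indeed routine, since a $\sim_{E}$-path with $E\subseteq C$ never leaves the $\sim_{C}$-class, so equality of faces and the partial order agree in the ambient and restricted settings.
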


\begin{proof}
We need to establish that, if $\Phi$ and $\Psi$ are two flags of $\mathcal{P}_{\mathcal{G}}$, then there exists a sequence of successively adjacent flags $\Phi=\Phi_0,\Phi_1,\ldots,\Phi_{l-1},\Phi_l=\Psi$ of $\mathcal{P}_{\mathcal{G}}$ such that $\Phi\cap\Psi\subseteq\Phi_i$ for all $i=0,\ldots,l$. 

Let $\Phi = (\mathcal{C},v)$ and $\Psi = (\mathcal{D},w)$ be two flag of $\mathcal{P}_\mathcal{G}$, where $\mathcal{C}=\{C_{0},\ldots,C_r\}$ and $\mathcal{D}=\{D_{0},\ldots,D_r\}$ are two maximal nested families of subsets of $R$. Define 
\[J:=\{j \mid (C_{j},v)=(D_{j},w)\},\]
and observe that $J\neq\emptyset$ since $r\in J$. Let $m$ denote the smallest suffix in $J$. 

We now exploit the connectedness of $\mathcal{G}$ to construct the sequence of flags. Since $m\in J$, we know that $C_{m}=D_{m}$ and that $v$ can be joined to $w$ by a path of length $p$ (say) involving only edges of $\mathcal{G}$  with colors from $C_m$. If $m=0$, then necessarily $v=w$ and $p=0$. If $m=1$, then only the edge colored with the color in $C_1$ is involved in the path and we may take $p=1$. 

The proof proceeds by induction on $p$. When $p=0$, the vertices $v$ and $w$ of $\mathcal{G}$ coincide, so we can simply appeal to the strong flag-connectedness of the vertex-figure of $\mathcal{P}_\mathcal{G}$ at $(\emptyset,v)$ to find the desired sequence of flags. Now, let $p\geq 1$. Then necessarily $m\geq 1$ and $C_{m}\neq \emptyset$. Suppose 
\[ v=v_{0},v_{1},\ldots, v_{p-1},v_{p}=w \]
is the sequence of successively adjacent vertices in a path from $v$ to $w$ in $\mathcal{G}$ all of whose edges are colored with colors from $C_m$. The key inductive step in the proof is to first join the given flag $\Phi=(\mathcal{C},v)$, by a suitable sequence of successively adjacent flags, to a new flag $\Lambda=(\mathcal{E},x)$ associated with the vertex $x:=v_{1}$ and with a maximal nested family $\mathcal{E}$ of subsets of $R$. Once this has been accomplished, we then further extend this first sequence by a suitable second sequence of successively adjacent flags joining $\Lambda$ and $\Psi$; the existence of this second sequence is guaranteed by the inductive hypothesis for $p-1$, applied to $\Lambda$ and $\Psi$. Finally, when concatenated, these two sequences yield the desired sequence joining $\Phi$ to $\Psi$. 

We begin by constructing $\Lambda$. Let $c_{1}$ denote the color of the edge joining $v$ to $v_{1}=x$. Then notice that in $\mathcal{P}_{\mathcal{G}}$ we have
\begin{equation}
\label{obs}
(C_{0},v), (D_{0},x) \,\leq\, (\{c_{1}\},v) \leq (C_{m},v) = (D_{m},x),  
\end{equation}
since $c_{1}\in C_{m}=D_{m}$ (recall the definition of $m$) and $x$ and $v$ are joined by an edge colored $c_1$. We now proceed in three steps. First, by (\ref{obs}) we can find a flag $\Lambda'$ (say) which has $(C_{0},v)$ as its $0$-face and $(\{c_{1}\},v)$ as its $1$-face and also includes the entire set $\Phi\cap\Psi$; when $m=1$ we can take $\Lambda'=\Phi$. Then, since $\Phi$ and $\Lambda'$ share a vertex and the vertex-figures are strongly flag-connected, we can join $\Phi$ and $\Lambda'$ by a sequence of successively adjacent flags, all containing $\Phi\cap\Lambda'$ and hence also $\Phi\cap\Psi$. Second, replace the $0$-face of $\Lambda'$ by $(D_{0},x)$ to obtain a new flag $\Lambda''$ which is $0$-adjacent to $\Lambda'$. Then extend the already existing sequence of flags joining $\Phi$ to $\Lambda'$ by this new flag $\Lambda''$. Third, take any maximal nested family $\mathcal{E} = \{E_{0},\ldots,E_{r}\}$ of subsets of $R$ with $E_{1}=\{c_1\}$ and $E_{j}=C_{j}=D_{j}$ for each $j\in J$, and define
\[ \Lambda := (\mathcal{E},x) = \{ (E_{0},x), \ldots, (E_{r},x) \}.\]
Note that a maximal nested family $\mathcal{E}$ of this kind exists since $c_{1}\in C_{m}$ and $C_m \subseteq C_j=D_j$ for each $j\in J$. Then it is immediately clear that $\Lambda$ shares a vertex with $\Lambda''$ and also contains $\Phi\cap\Lambda$; here the latter follows from the fact that $c_{1}\in C_{j}=D_{j}$ for each $j\in J$. Now we can complete the argument by appealing once more to the flag-connectedness of the vertex-figures. In fact, since $\Lambda$ and $\Lambda''$ have a vertex in common, we can find a sequence of successively adjacent flags from $\Lambda''$ to $\Lambda$, which all contain $\Lambda''\cap\Lambda$ and hence also $\Phi\cap\Psi$. We now have connected $\Phi$ and $\Lambda$ by a flag sequence, via $\Lambda'$ and $\Lambda''$. 
\end{proof}

It is immediately clear from the construction that the $1$-skeleton of $\mathcal{P}_{\mathcal{G}}$ is isomorphic to $\mathcal{G}$. In our subsequent discussion we usually identify the $1$-skeleton of $\mathcal{P}_{\mathcal{G}}$ with $\mathcal{G}$ and simply denote its vertices $(\emptyset,v)$ and edges $\{(\emptyset,v),(\emptyset,v')\}$ by $v$ or $\{v,v'\}$, respectively. Notice also that the vertex-figures of $\mathcal{P}_\mathcal{G}$ are $(r-1)$-simplices, so in particular each vertex of $\mathcal{P}_{\mathcal{G}}$ is contained in $r!$ flags. 

Each facet of $\mathcal{P}_{\mathcal{G}}$ is of the form $(R\setminus\{b\},v)$, where $b$ is a color in $R$ and $v$ is a vertex of $\mathcal{G}$. The pair $(b,v)$ determines a connected properly (edge) colored $(r-1)$-regular graph ${\mathcal{G}_{b,v}}$ with color set $R\setminus\{b\}$, namely the connected component of $v$ in the new graph $\mathcal{G}_b$ obtained from $\mathcal{G}$ by deleting all the edges of $\mathcal{G}$ colored $b$. Then the facet $(R\setminus\{b\},v)$ of $\mathcal{P}_{\mathcal{G}}$ is isomorphic to the colorful polytope $\mathcal{P}_{\mathcal{G}_{b,v}}$ of rank $r-1$ associated with this graph ${\mathcal{G}_{b,v}}$. Conversely, given any color $b$ of $R$ and any vertex $v$ of $\mathcal{G}$, the corresponding connected component of $v$ in $\mathcal{G}_{b}$ gives rise to a facet of $\mathcal{P}_{\mathcal{G}}$ of the form $(R\setminus\{b\},v)$ in this manner. 

In summary, we have established the following theorem.

\begin{theorem}
\label{mainthm}
Let $\mathcal{G}$ be a connected properly (edge) colored $r$-regular graph. Then $\mathcal{P}_{\mathcal{G}}$ is an abstract polytope of rank $r$, called the {\em colorful polytope associated with $\mathcal{G}$}.  In particular, $\mathcal{P}_{\mathcal{G}}$ is a simple polytope with $1$-skeleton isomorphic to $\mathcal{G}$. The facets of $\mathcal{P}_{\mathcal{G}}$ are in one-to-one correspondence with the colorful polytopes of rank $r-1$ associated 
with the connected components of the graph obtained from $\mathcal{G}$ by the deletion of the edges in a single color class.
\end{theorem}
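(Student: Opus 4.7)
The plan is to read the theorem as an assembly of the claims already developed in the paragraphs preceding its statement, supplemented by Lemma~\ref{flagconn}, so the proof proposal is really a matter of verifying each axiom of an abstract polytope in turn and then extracting the remaining structural observations.

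First I would write down the partially ordered set explicitly: the non-negative rank faces are the pairs $(C,v)$ with $C\subseteq R$ and $v\in V$, taken modulo the equivalence $(C,v)=(C,w)$ whenever $v\sim_C w$, ordered by the rule given in the text; then I would append a unique $(-1)$-face smaller than all others to obtain $\mathcal{P}_{\mathcal{G}}$. With this at hand, the rank function is well-defined (the rank of $(C,v)$ is $|C|$) and strictly monotone, and a flag has precisely $r+2$ faces because of the observation in the text that any chain of incident faces of non-negative rank admits a common second entry and thus corresponds exactly to a chain of subsets of $R$; appending $F_{-1}$ and the maximal element $F_r=(R,v)$ then shows each flag has the form $\{F_{-1}\}\cup\{(C_0,v),(C_1,v),\ldots,(C_r,v)\}$ with a maximal nested family $C_0\subset C_1\subset\cdots\subset C_r$.

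Next, I would verify the diamond condition. For a pair of faces $(C_{j-1},v)<(C_{j+1},v)$ with $1\leq j\leq r-1$, the intermediate $j$-faces correspond bijectively to the $j$-subsets $C$ with $C_{j-1}\subset C\subset C_{j+1}$, of which there are exactly two. For $j=0$, the intermediate vertices between $F_{-1}$ and $(\{c\},v)$ are the two endpoints $(\emptyset,v)$ and $(\emptyset,w)$ of the unique edge colored $c$ at $v$; and for $j=r$ the condition is vacuous as there is a unique $F_r$. Strong flag-connectedness is precisely the content of Lemma~\ref{flagconn}, which completes the verification that $\mathcal{P}_{\mathcal{G}}$ is an abstract polytope of rank $r$.

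It then remains to harvest the three supplementary statements. The $1$-skeleton claim follows directly from the construction: the $0$-faces are the pairs $(\emptyset,v)$ with $v\in V$ and the $1$-faces are the pairs $(\{c\},v)$ whose underlying vertex set is precisely a color-$c$ edge of $\mathcal{G}$, so the assignment $(\emptyset,v)\mapsto v$ and $(\{c\},v)\mapsto\{v,w\}$ is the required isomorphism. Simplicity is the already-noted remark that the vertex-figure at $(\emptyset,v)$ is the Boolean lattice on $R$, hence an $(r-1)$-simplex. Finally, for the facet correspondence I would use the observation that a facet has the form $(R\setminus\{b\},v)$ with $b\in R$, and that its vertex set is the connected component of $v$ in the graph $\mathcal{G}_b$ obtained from $\mathcal{G}$ by removing all $b$-colored edges; the section below such a facet is then built from the same construction applied to $\mathcal{G}_{b,v}$ and thus coincides with $\mathcal{P}_{\mathcal{G}_{b,v}}$, giving the stated bijection between facets and connected components. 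Essentially no obstacle arises here beyond bookkeeping; the only subtle point is checking that the equality $(C,v)=(D,w)$ is preserved under the projection $F\mapsto F\cap (R\setminus\{b\})$, which is immediate from the definition of $\sim_C$.
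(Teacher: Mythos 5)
Your proposal is correct and takes essentially the same route as the paper: the paper's own proof of Theorem~\ref{mainthm} is exactly the assembly of the preceding observations (the face poset $(C,v)$ with its partial order, the flag structure via maximal nested families, the uniqueness of $j$-adjacent flags, the simplex vertex-figures, and the facet description via $\mathcal{G}_{b,v}$) together with Lemma~\ref{flagconn} for strong flag-connectedness. The only cosmetic difference is that you verify the diamond condition directly by counting intermediate faces, whereas the paper phrases the same count as the existence of a unique $j$-adjacent flag for each $j$.
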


Figures~\ref{K4} and ~\ref{Botella} illustrate two small examples of colorful polyhedra (polytopes of rank $3$). The first is the hemi-cube $\{4,3\}/2$, a regular map on the projective plane obtained from the properly (edge) colored $3$-regular graph given  by the (unique) $1$-factorization of $K_4$.  Note here that the colorful polyhedron $\mathcal{P}_{K_4} = \{4,3\}/2$ is not (isomorphic to) a convex polyhedron, although the underlying graph $K_4$ does of course occur as the 1-skeleton of a convex polyhedron, namely the tetrahedron. However, these two polyhedra are closely related, in that one is the Petrie dual of the other (see \cite{McMS02}). The second is a map with four $2$-faces (two squares and two octagons) on the Klein bottle and is associated with the $3$-regular graph on $8$ vertices that is shown on the left in Figure~\ref{Botella}.

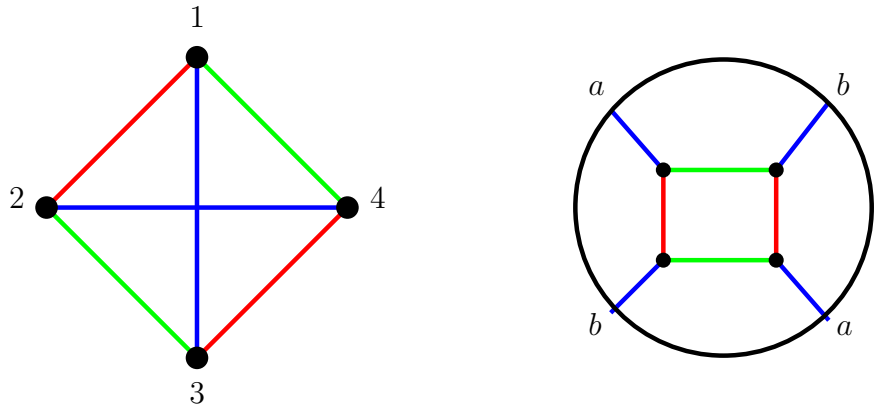
\begin{figure}[p]
\begin{center}
\psset{unit=1.0cm, linewidth=0.06cm}
   \begin{pspicture}(0,0)(7.8,8.3)

  \psline [linecolor=green](0,3)(-2,5)
   \psline [linecolor=green](0,7)(2,5)
   \psline [linecolor=red](0,3)(2,5)
   \psline [linecolor=red](-2,5)(0,7)
    \psline [linecolor=blue](-2,5)(2,5)
    \psline [linecolor=blue](0,3)(0,7)
   
\put(8.5,6.5){$b$}
\put(5.2,6.5){$a$}
\put(5.2,3.3){$b$}
\put(8.5,3.3){$a$}

 \cnode*(0,3){0.15}{00}\put(-0.1,2.4){$3$}
  \cnode*(-2,5){0.15}{10}\put(-2.5,5){$2$}
  \cnode*(0,7){0.15}{20}\put(-0.1,7.4){$1$}
   \cnode*(2,5){0.15}{30}\put(2.3,5){$4$}

\psline [linecolor=blue](7.7,5.5)(8.4,6.4)
\psline [linecolor=blue](6.2,5.5)(5.5,6.3)
\psline [linecolor=blue](6.2,4.3)(5.5,3.6)
\psline [linecolor=blue](7.7,4.3)(8.4,3.5)
\psline [linecolor=green](6.2,5.5)(7.7,5.5)
\psline [linecolor=green](6.2,4.3)(7.7,4.3)
\psline [linecolor=red](6.2,5.5)(6.2,4.3)
\psline [linecolor=red](7.7,4.3)(7.7,5.5)

\pscircle(7,5){2}

\cnode*(7.7,5.5){0.10}{40}
\cnode*(6.2,5.5){0.10}{50}
\cnode*(6.2,4.3){0.10}{60}
\cnode*(7.7,4.3){0.10}{70}

    \end{pspicture}
\vspace{-1.5cm}
\caption{\label{K4} The hemi-cube derived from $K_4$.}

\end{center}
\end{figure}

\begin{figure}[p]
\begin{center}
\psset{unit=1.0cm, linewidth=0.06cm}
  \begin{pspicture}(0,0)(7.8,8.3)
 
   \psline [linecolor=green](-4,6)(-2,7)
   \psline [linecolor=green](-2,5)(0,6)
   \psline [linecolor=red](-4,6)(0,6)
   \psline [linecolor=blue](-4,6)(-2,5)
   \psline [linecolor=blue](0,6)(-2,7)

   \psline [linecolor=green](-4,1)(-2,2)
   \psline [linecolor=green](-2,0)(0,1)
   \psline [linecolor=red](-4,1)(0,1)
   \psline [linecolor=blue](-4,1)(-2,0)
   \psline [linecolor=blue](0,1)(-2,2)
   \psline [linecolor=red](-2,5)(-2,2)
     \pscurve  [linecolor=red](-2,7)(1,6)(1,1)(-2,0)
     
\cnode*(-2,5){0.15}{00}\put(-1.9,4.5){$3$}
\cnode*(-4,6){0.15}{10}\put(-4.5,6){$2$}
\cnode*(-2,7){0.15}{20}\put(-1.9,7.4){$1$}
 \cnode*(0,6){0.15}{30}\put(0.3,6){$4$}
        
  \cnode*(-2,0){0.15}{00}\put(-1.9,-0.5){$3'$}
  \cnode*(-4,1){0.15}{10}\put(-4.5,1){$2'$}
  \cnode*(-2,2){0.15}{20}\put(-1.9,2.2){$1'$}
   \cnode*(0,1){0.15}{30}\put(0.3,1){$4'$}
     
\psline(4,6)(5,6) 
\psline [linecolor=red](5,6)(7,6)
\psline{->>}(7,6)(8,6) 
\psline(7.9,6)(9,6)
\psline [linecolor=red](9,6)(11,6)
\psline(11,6)(12,6) 
\psline(4,6)(4,5.4) 
\psline{<-}(4,5.6)(4,5) 
\psline [linecolor=red](4,5)(4,3)
\psline(4,3)(4,2.4) 
\psline{<-}(4,2.5)(4,2) 
\psline(4,2)(5,2) 
\psline [linecolor=red](5,2)(7,2)
\psline(7,2)(7.8,2) 
\psline{<<-}(7.5,2)(9,2)
\psline [linecolor=red](9,2)(11,2)
\psline(11,2)(12,2) 
\psline(12,6)(12,5.5) 
\psline{->}(12,5)(12,5.6) 
\psline [linecolor=red](12,5)(12,3)
\psline(12,3)(12,2.4) 
\psline{<-}(12,2.5)(12,2) 
\psline [linecolor=red](8,5)(8,3)
\psline [linecolor=blue](5,6)(4,5)
\psline [linecolor=blue](4,3)(5,2)
\psline [linecolor=blue](7,2)(8,3)
\psline [linecolor=blue](8,5)(7,6)
\psline [linecolor=green](8,5)(9,6)
\psline [linecolor=green](11,6)(12,5)
\psline [linecolor=green](8,3)(9,2)
\psline [linecolor=green](11,2)(12,3)
\cnode*(4,6){0.05}{40}  
\cnode*(5,6){0.15}{50}\put(5,6.3){$1'$}   
\cnode*(7,6){0.15}{60}\put(7,6.3){$3$}   
\cnode*(9,6){0.15}{70}\put(9,6.3){$1$}   
\cnode*(11,6){0.15}{80}\put(11,6.3){$3'$}   
\cnode*(12,6){0.05}{90}
\cnode*(4,5){0.15}{60}\put(3.5,4.8){$4'$}  
\cnode*(4,3){0.15}{60}\put(3.5,2.8){$2'$}  
\cnode*(4,2){0.05}{100}
\cnode*(5,2){0.15}{50}\put(5,1.5){$3'$}  
\cnode*(7,2){0.15}{80}\put(7,1.5){$1$}  
\cnode*(9,2){0.15}{70}\put(9,1.5){$3$}   
\cnode*(11,2){0.15}{80}\put(11,1.5){$1'$}  
\cnode*(12,2){0.05}{90} 
\cnode*(12,5){0.15}{60}\put(12.3,5){$4'$}  
\cnode*(12,3){0.15}{70}\put(12.3,3){$2'$}  
\cnode*(8,5){0.15}{60}\put(8.3,4.8){$2$}  
\cnode*(8,3){0.15}{60}\put(8.3,2.8){$4$}  
   
    \end{pspicture}
    \vspace{.4cm}
\caption{\label{Botella} A $3$-regular graph and its polyhedron on the Klein bottle.}

\end{center}
\end{figure}
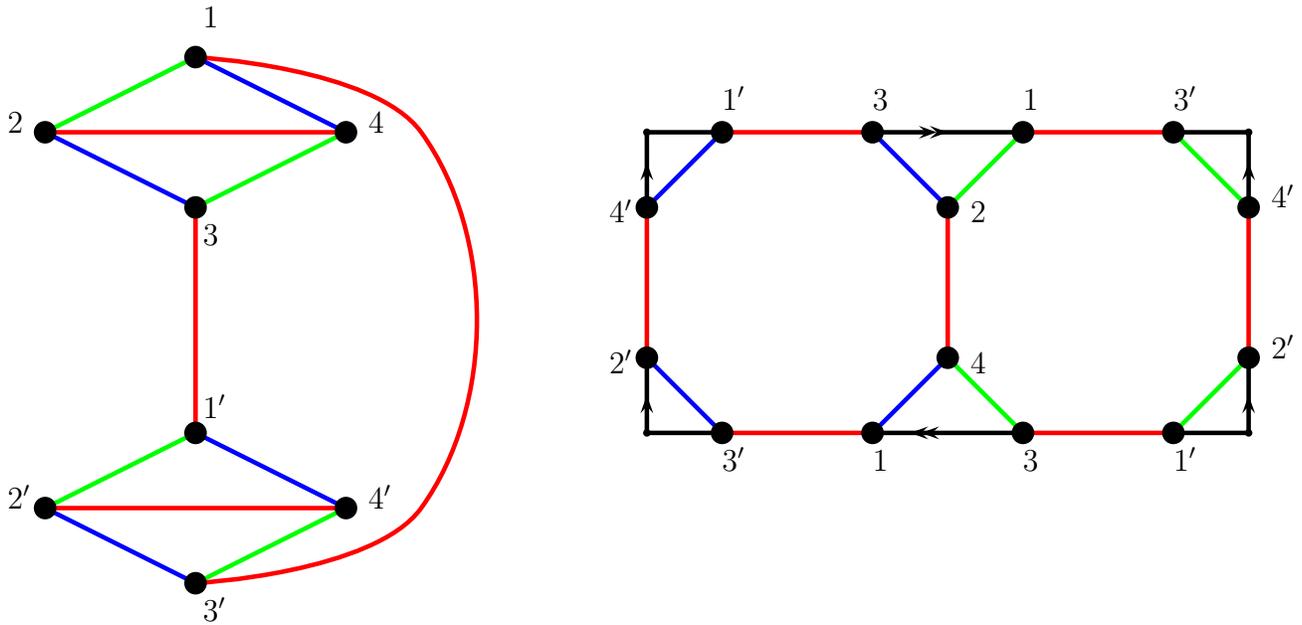

It is interesting to explore the possibility of when a colorful polytope is isomorphic to a (simple) convex polytope. By construction, the $2$-faces of any colorful polytope $\mathcal{P}_{\mathcal{G}}$ exhibit an alternating pattern of two colors on their edges and hence must have an even number of edges; in fact, every $2$-face is of the form $(C,v)$ where $v$ is a vertex of the graph $\mathcal{G}$ and $C$ is a set of two colors from $R$. This evenness condition is violated when $K_4$ is realized as the edge graph of the tetrahedron. On the other hand, we know that any two simple convex $r$-polytopes with the same $1$-skeleton are isomorphic (see \cite{BlindMan87,kalai}). Hence, given a properly (edge) colored $r$-regular graph $\mathcal{G}$ that is the $1$-skeleton of a convex $r$-polytope $\mathcal{Q}$, then, if the corresponding colorful $r$-polytope $\mathcal{P}_{\mathcal{G}}$ is also a convex polytope, these two polytopes $\mathcal{P}_{\mathcal{G}}$ and $\mathcal{Q}$ must be isomorphic. A natural question to ask here is when the edges of the 1-skeleton of a simple convex polytope can be colored is such a way that the corresponding colorful polytope is again a convex polytope (and hence isomorphic to the given polytope).

Note that, in general, $r$-regular graphs that admit more than one 1-factorization will give rise to more than one colorful polytope. For example, Figure~\ref{cubes} shows two edge colorations of the edge graph of the cube that give rise to two different colorful polyhedra. In fact, one polyhedron is the cube itself (on the $2$-sphere), while the other is a map on the torus with two octagons and two squares as 2-faces. The first polyhedron is combinatorially regular, with an automorphism group acting flag-transitively, while the other has an automorphism group with three flag orbits.

\begin{figure}[h]
\begin{center}
\vspace{-1cm}
\includegraphics[width=15cm]{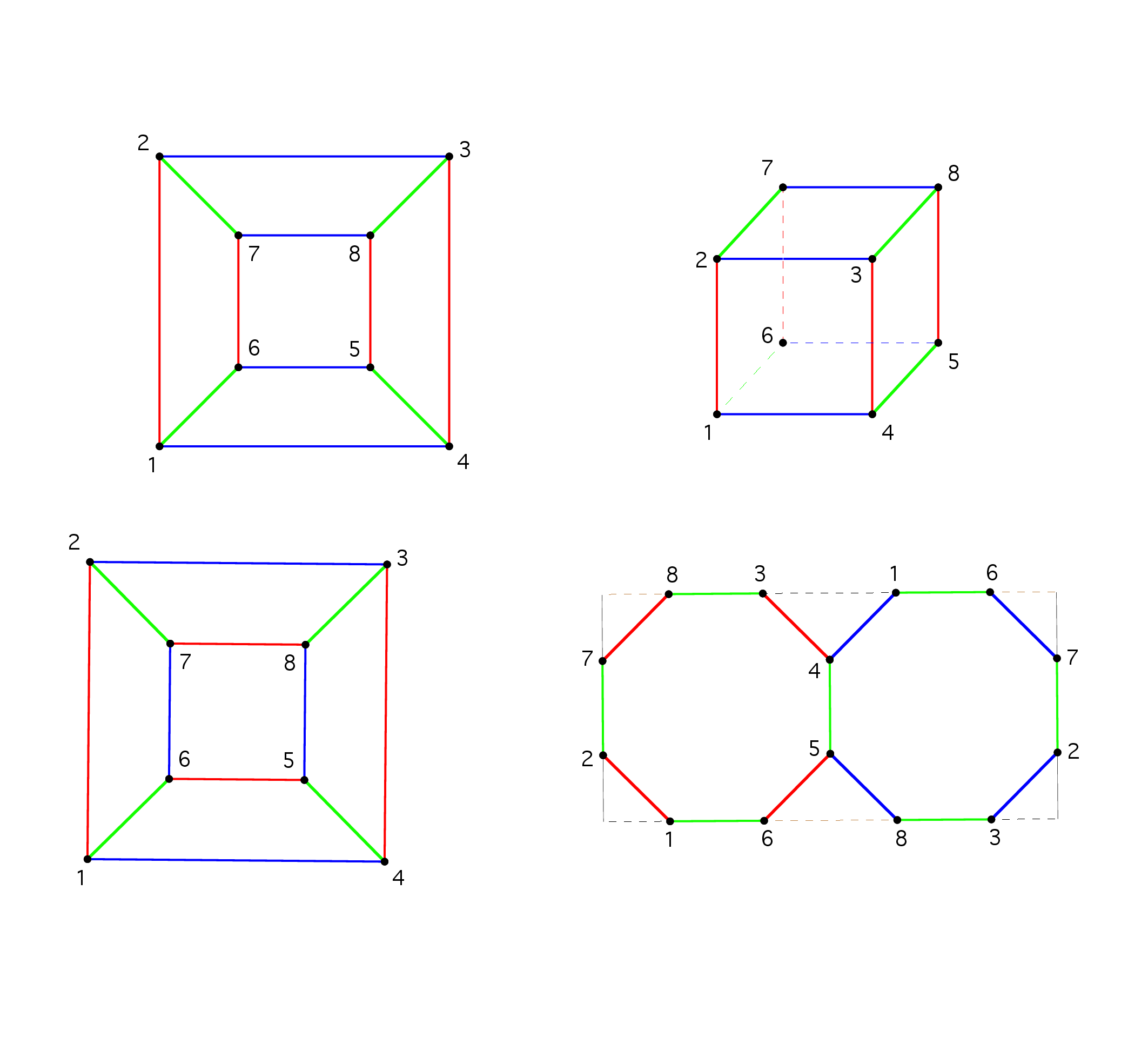} 
%\vspace{-15cm}
%\includegraphics[width=15cm]{}
%\includegraphics[width=15cm]{}
\vspace{-3cm}
\caption{Two edge colorings of the cube and their colorful polytopes.}
\label{cubes}
\end{center}
\end{figure}

\section{The automorphism group of a colorful polytope}
\label{autgrcol}

As before, let $\mathcal{G}$ be a properly (edge) colored $r$-regular graph with edge coloring map $c: E \rightarrow R$ and graph automorphism group $\Gamma(\mathcal{G})$, and let $\mathcal{P}_{\mathcal{G}}$ be the corresponding colorful polytope of rank $r$. In this section we establish that the combinatorial automorphism group $\Gamma(\mathcal{P}_{\mathcal{G}})$ of the polytope $\mathcal{P}_{\mathcal{G}}$ is given by the subgroup $\Gamma_{c}(\mathcal{G})$ of $\Gamma(\mathcal{G})$ of all color respecting automorphisms of $\mathcal{G}$.

There is a natural homomorphism 
\[ \kappa: \Gamma_{c}(\mathcal{G}) \to S_R \] 
from $\Gamma_{c}(\mathcal{G})$ to the symmetric group $S_R$ on the color set $R$. This associates with every $\gamma \in \Gamma_{c}(\mathcal{G})$ the permutation $\bar{\gamma}:=\kappa(\gamma)$ of the colors induced by $\gamma$. More explicitly, if $c'$ is any color in $R$, then $\bar{\gamma}(c'):=c(\gamma(e))$, where $e$ is any edge of $\mathcal{G}$ with $c(e)=c'$; thus 
\[ \bar{\gamma}(c(e)):=c(\gamma(e))\;\;\,(e\in E).\] 
Moreover, there is also a natural injective homomorphism 
\[ \mu: \Gamma_{c}(\mathcal{G})\to\Gamma(\mathcal{P}_{\mathcal{G}}) \]
from $\Gamma_{c}(\mathcal{G})$ to $\Gamma(\mathcal{P}_{\mathcal{G}})$, which allows us to identify $\Gamma_{c}(\mathcal{G})$ with a subgroup of $\Gamma(\mathcal{P}_{\mathcal{G}})$. In fact, every $\gamma \in \Gamma_{c}(\mathcal{G})$ induces a (well-defined) polytope automorphism $\widehat{\gamma}:=\mu(\gamma)$ of $\mathcal{P}_{\mathcal{G}}$ defined by           
\begin{equation}
\label{phihat}
\widehat{\gamma}((C,v)):=(\bar{\gamma}(C),\gamma(v)).
\end{equation}
Note that $\widehat{\gamma}$ is trivial if and only of $\gamma$ is trivial (choose $C=\emptyset$). Our main objective is to show that  $\Gamma_{c}(\mathcal{G})$ is in fact the full automorphism group $\Gamma(\mathcal{P}_{\mathcal{G}})$ of the colorful polytope.  

\begin{theorem}
\label{automor}
Let $\mathcal{G}$ be a properly (edge) colored $r$-regular graph, and let $\mathcal{P}_\mathcal{G}$ be the corresponding colorful polytope. Then 
$\Gamma(\mathcal{P}_{\mathcal{G}}) = \Gamma_{c}(\mathcal{G})$.
\end{theorem}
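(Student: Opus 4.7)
The plan is to prove the reverse inclusion $\Gamma(\mathcal{P}_{\mathcal{G}}) \subseteq \Gamma_{c}(\mathcal{G})$, since the inclusion $\Gamma_{c}(\mathcal{G}) \subseteq \Gamma(\mathcal{P}_{\mathcal{G}})$ via the injective homomorphism $\mu$ has already been established. Given $\alpha \in \Gamma(\mathcal{P}_{\mathcal{G}})$, I would first observe that $\alpha$ preserves rank and incidence, hence restricts to a bijection on the $0$- and $1$-faces that is a graph automorphism $\gamma := \alpha|_{\mathcal{G}}$ of the $1$-skeleton $\mathcal{G}$. The goal is to show that $\gamma$ is color respecting and that $\mu(\gamma) = \alpha$.

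The key device is a local color permutation attached to each vertex. For every vertex $v$ of $\mathcal{G}$, the faces of $\mathcal{P}_{\mathcal{G}}$ containing the $0$-face $(\emptyset,v)$ are exactly the faces $(C,v)$ with $C \subseteq R$, ordered by inclusion of $C$, since the vertex-figure at $(\emptyset,v)$ is an $(r-1)$-simplex (i.e.\ the Boolean lattice on $R$). Thus $\alpha$ sends this Boolean lattice isomorphically to the analogous Boolean lattice attached to $\gamma(v)$, and such an order-isomorphism is induced by a unique permutation $\sigma_v: R \to R$ satisfying $\alpha((C,v)) = (\sigma_v(C), \gamma(v))$ for every $C \subseteq R$. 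In particular, $\sigma_v(c)$ is the color of the edge $\gamma(e)$, where $e$ is the unique edge of color $c$ at $v$.

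The heart of the argument is to show that $\sigma_v$ is independent of $v$. Let $v, w$ be adjacent vertices joined by an edge $e$ of color $c$, and let $c' \neq c$ be any other color. The $2$-face $F = (\{c,c'\},v) = (\{c,c'\},w)$ contains both $v$ and $w$, and its image $\alpha(F)$ is also a $2$-face of $\mathcal{P}_{\mathcal{G}}$, necessarily of the form $(\{d,d'\}, \gamma(v))$ for two colors $d, d' \in R$. Reading off the colors of the two edges of $F$ at $v$ (namely $c$ and $c'$) and their images in $\alpha(F)$ at $\gamma(v)$ (which must have colors $d$ and $d'$) forces $\{\sigma_v(c), \sigma_v(c')\} = \{d, d'\}$; the same reasoning at $w$ yields $\{\sigma_w(c), \sigma_w(c')\} = \{d, d'\}$. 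Since $\sigma_v(c) = \sigma_w(c) = c(\gamma(e))$, it follows that $\sigma_v(c') = \sigma_w(c')$. As $c'$ was arbitrary, $\sigma_v = \sigma_w$ for adjacent $v, w$, and by connectedness of $\mathcal{G}$ there is a single permutation $\sigma := \sigma_v$ valid at every vertex. Hence $c(\gamma(e)) = \sigma(c(e))$ for every edge $e$, so $\gamma \in \Gamma_c(\mathcal{G})$ with $\bar{\gamma} = \sigma$, and the formula $\alpha((C,v)) = (\sigma(C), \gamma(v)) = \widehat{\gamma}((C,v))$ gives $\alpha = \mu(\gamma)$, completing the proof.

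The main obstacle I expect is the ``propagation'' step: verifying that a $2$-face over colors $\{c, c'\}$ must map to a $2$-face over exactly two colors $\{d, d'\}$, and that the induced permutations of colors read off at the two endpoints of a shared edge are forced to agree. This relies crucially on the explicit description of $2$-faces as alternating cycles in two color classes (from Section~\ref{polsfromgraphs}) together with the fact that $\alpha$ preserves rank. Once this local consistency is secured, everything else reduces to the connectivity of $\mathcal{G}$ and the simplex structure of the vertex-figures.
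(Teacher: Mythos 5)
Your proposal is correct, and it takes a genuinely different route from the paper's proof. The paper first shows that the restriction homomorphism $\pi:\Gamma(\mathcal{P}_{\mathcal{G}})\to\Gamma(\mathcal{G})$ has trivial kernel, and then proves the color-respecting property ``from the top down'': given two edges $e,e'$ of the same color $c'$, it joins them by a path, reduces to the case where no intermediate edge has color $c'$, and observes that the whole intermediate path lies in a single facet $(R\setminus\{c'\},v_1)$; since the vertex-figures are simplices, the unique edge at a vertex of a facet not lying in that facet is colored with the unique missing color, so $\phi(e)$ and $\phi(e')$ both receive the color missing from the image facet. You instead work ``from the bottom up'': you extract a local color permutation $\sigma_v$ at each vertex from the Boolean-lattice structure of the vertex-figure, and propagate it across an edge $\{v,w\}$ by comparing the two designations $(\{\sigma_v(c),\sigma_v(c')\},\gamma(v))=(\{\sigma_w(c),\sigma_w(c')\},\gamma(w))$ of the image of the $2$-face $(\{c,c'\},v)$, pinning down the common value on $c$ via the shared edge and concluding by connectedness. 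Both arguments rest on the simplex vertex-figures, but yours uses $2$-faces where the paper uses facets, and it has the advantage of producing the explicit identity $\alpha=\widehat{\gamma}=\mu(\gamma)$ in a single pass, which subsumes the paper's separate injectivity step and avoids the path-splitting bookkeeping of the general case. The only points worth making fully explicit in a final write-up are (i) that an order-automorphism of a finite Boolean lattice is induced by a permutation of its atoms (so $\sigma_v$ exists and is unique), and (ii) that the color set $C$ in a designation $(C,u)$ is an invariant of the face, so the two readings of $\alpha(F)$ at $\gamma(v)$ and at $\gamma(w)$ really do yield the same two-element color set; both follow directly from the construction in Section~\ref{polsfromgraphs}.
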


\begin{proof}
We already know that $\Gamma_{c}(\mathcal{G})$ can be viewed as a subgroup of $\Gamma(\mathcal{P}_{\mathcal{G}})$. It remains to show that this subgroup is the full group $\Gamma(\mathcal{P}_{\mathcal{G}})$. To this end, consider the homomorphism 
\[ \pi: \Gamma(\mathcal{P}_{\mathcal{G}}) \to \Gamma(\mathcal{G})\] 
defined by $\pi(\alpha):=\alpha |_{\mathcal{G}}$, the latter being the restriction of $\alpha$ to the $1$-skeleton $\mathcal{G}$ of $\mathcal{P}_{\mathcal{G}}$. 

We first show that $ker(\pi )$ is trivial, so $\pi$ is injective. If $\alpha \in ker(\pi)$, then $\alpha$ fixes every vertex and every edge of $\mathcal{G}$ and therefore also of $\mathcal{P}_{\mathcal{G}}$; moreover, $\pi(\alpha)$ preserves the colors of the edges of $\mathcal{G}$ and hence lies in $\Gamma_{c}(\mathcal{G})$ (in fact, $\overline{\pi(\alpha)}$ is the identity permutation on $R$). It follows that $\alpha$ must fix every face $(K,v)$ of $\mathcal{P}_{\mathcal{G}}$, since its vertex set is (even pointwise) invariant under $\alpha$; in fact, each vertex and each edge, as well as each edge color, of the properly (edge) colored graph $\mathcal{G}$ are fixed by $\alpha$ or $\overline{\pi(\alpha)}$, respectively. Bear in mind here that a face of $\mathcal{P}_{\mathcal{G}}$ is completely determined by its vertex set. Hence $\alpha$ is trivial.

Thus, via $\pi$, we can identify $\Gamma(\mathcal{P}_{\mathcal{G}})$ with a subgroup of $\Gamma(\mathcal{G})$. It remains to show that $\Gamma(\mathcal{P}_{\mathcal{G}})$ actually lies in $\Gamma_{c}(\mathcal{G})$; that is, each polytope automorphism of $\mathcal{P}_{\mathcal{G}}$ corresponds, under $\pi$, to a color respecting graph automorphism of $\mathcal{G}$.

Let $\phi \in \Gamma(\mathcal{P}_{\mathcal{G}})$. We need to show that if $e,e'$ are two edges of $\mathcal{G}$ of the same color, then their images under $\phi$ are again two edges of the same color (possibly distinct from the color on $e$ and $e'$). So let $e$ and $e'$ be edges of $\mathcal{G}$ with $c(e)=c(e')=:c'$. We wish to prove that then also $c(\phi(e))=c(\phi(e'))$. Since $\mathcal{G}$ is a connected graph, there exists a sequence  
\begin{equation}
\label{epath}
e=e_0,v_1,e_1,v_2,\dots,v_{m-1},e_{m-1},v_m,e_m=e' 
\end{equation} 
of successively incident edges and vertices of $\mathcal{G}$ that forms a path connecting $e$ and $e'$. Then the images under $\phi$ also form a sequence
\begin{equation}
\label{phiepath}
\phi(e)\!=\!\phi(e_0),\phi(v_1),\phi(e_1),\phi(v_2),\dots,\phi(v_{m-1}),\phi(e_{m-1}),\phi(v_m),\phi(e_m)\!=\!\phi(e'),
\end{equation}
of successively incident edges and vertices of $\mathcal{G}$, now forming a path from $\phi(e)$ and $\phi(e')$. We first settle the particular case when none of the intermediate edges $e_1,\ldots,e_{m-1}$ of the sequence in (\ref{epath}) has the color $c'$. The general case can be reduced to this special case.

Suppose none of the edges $e_1,\ldots,e_{m-1}$ has the color $c'$. Set $C':=R\setminus \{c'\}$, where as before $R$ is the color set of $\mathcal{G}$. Then $F:=(C',v_1)$ is a facet of $\mathcal{P}_{\mathcal{G}}$ containing all the vertices $v_1,\ldots,v_m$ (as vertices) and all the intermediate edges $e_1,\ldots,e_{m-1}$ (as edges), but not the first edge $e$ and last edge $e'$, of the sequence. In fact, $C'$ has cardinality $r-1$ and the intermediate sequence $v_1,e_1,v_2,\dots,v_{m-1},e_{m-1},v_m$ defines an edge path in $\mathcal{G}$ consisting only of edges colored with colors from $C'$; hence, since $F$ is the connected component in $\mathcal{G}$ under the equivalence relation $\sim_{C'}$, the members of the intermediate sequence must all belong to $F$, either as vertices or as edges. On the other hand, $e$ and $e'$ have color $c'$ and hence do not belong to $F$. Notice also that $F=(C',v_j)$ for each $j=1,\ldots,m$. 

Now recall that the vertex-figures of $\mathcal{P}_{\mathcal{G}}$ are $(r-1)$-simplices. Hence, if $v$ is any vertex and $G$ any facet of $\mathcal{P}_{\mathcal{G}}$ containing $v$, then there is just one edge of $\mathcal{P}_{\mathcal{G}}$ with vertex $v$ that does not lie $G$. In particular, the color of this edge is just the one color that is not being used to define the facet $G$. 

Now we can argue as follows. Since all the vertices and all the intermediate edges of the sequence in (\ref{epath}) lie in the  facet $F$ of $\mathcal{P}_{\mathcal{G}}$ and $\phi$ is a polytope automorphism, all the vertices and all the intermediate edges of the sequence in (\ref{phiepath}) must lie in the facet $\phi(F)$ of $\mathcal{P}_{\mathcal{G}}$. Similarly, the first edge $\phi(e)$ and the last edge $\phi(e')$ in (\ref{epath}) are not edges of $\phi(F)$. Hence, if our previous observations are applied with $G:=\phi(F)$ and $v:=\phi(v_1)$ or $v:=\phi(v_m)$, we immediately see that the edges $\phi(e)$ and $\phi(e')$ must be colored with the color not being used on $\phi(F)$.  More explicitly, if $\phi(F)=(R\setminus\{c''\},\phi(v_1))$ (say), then $c(\phi(e))=c(\phi(e'))=c''$, as desired. This concludes the proof in the special case when none of the edges $e_1,\ldots,e_{m-1}$ in (\ref{epath}) has the color $c'$.

Finally, in the general case we can simply view the sequence in (\ref{epath}) as a concatenation of subsequences in which only the first edge and the last edge have the same color $c'$ as $e$ and $e'$ (in particular, none of the intermediate edges in a subsequence has color $c'$). In fact, if 
\[ e=e_{0}=:e_{j_0}, e_{j_1},\ldots,e_{j_{k-1}},e_{j_k}:=e_{m}=e', \]
with $0=j_{0}<j_{1}< \ldots < j_{k-1}<j_{k}=m$, are all the edges of the sequence (\ref{epath}) which have color $c'$, then 
for every $l=0,\ldots,k-1$ the subsequence
\[ e_{j_l},v_{j_{l}+1},e_{j_{l}+1},v_{j_{l}+2}, \ldots, v_{j_{l+1}},e_{j_{l+1}} \]
is of the required type and hence $c(\phi(e_{j_l}))=c(\phi(e_{j_{l+1}}))$. Thus $c(\phi(e))=c(\phi(e'))$, the desired conclusion.
\end{proof}

Recall that an (abstract) polytope is said to be {\em regular} if its automorphism groups acts transitively on the flags. A  colorful polytope $\mathcal{P}_{\mathcal{G}}$ (in fact, any simple $r$-polytope) is regular if and only if its automorphism group acts transitively on the vertices and the stabilizer of a vertex acts as a symmetric group $S_r$ on the $r$ edges that contain the vertex. In particular, if $\mathcal{P}_{\mathcal{G}}$ is regular, then the natural homomorphism $\kappa: \Gamma(\mathcal{P}_{\mathcal{G}})=\Gamma_{c}(\mathcal{G}) \to S_R$ defined earlier is surjective, so every permutation of colors in $R$ can be realized by an automorphism of $\mathcal{G}$ (and $\mathcal{P}$). It would be interesting to know which properly (edge) colored graphs have maximum possible symmetry of this kind.  

\section{The flag-adjacency polytope}
\label{flagpo}

Every abstract polytope naturally gives rise to a colorful polytope, its flag-adjacency polytope, that is closely related to its  monodromy polytope describe in Section~\ref{mono}.

Let $\mathcal{P}$ be an abstract $n$-polytope. The {\em flag adjacency graph}, or simply {\em flag graph\/}, $\mathcal{G}$ of $\mathcal{P}$ is the properly (edge) colored $n$-regular graph whose vertices are the flags of $\po$ and whose edges with color $i$ join two vertices if and only if the corresponding flags are {\em $i$-adjacent\/} (differ exactly in their faces of rank $i$); here the underlying set of colors is $R:=\{0,\ldots,n-1\}$. Thus the vertex set $V$ of the flag-graph $\mathcal{G}$ is the set $\mathcal{F}(\mathcal{P})$ of flags of $\mathcal{P}$, and $\mathcal{G}$ itself is just the (properly (edge) colored) $1$-skeleton of the dual of the order complex of $\po$ (see \cite[Section 2C]{McMS02}). From Theorem~\ref{mainthm} we also know that $\mathcal{G}$ is the 1-skeleton of the corresponding colorful polytope $\po_{\mathcal{G}}$ of rank $n$, which we have named the {\em flag adjacency polytope\/}. 

Again let $\mathcal{P}$ be an $n$-polytope, and let $\mathcal{G}$ be its flag graph. By Theorem~\ref{automor}, the automorphism group $\Gamma(\po_{\mathcal{G}})$ of the colorful polytope $\po_{\mathcal{G}}$ for $\mathcal{G}$ is isomorphic to the group $\Gamma_{c}(\mathcal{G})$ of color respecting automorphisms of $\mathcal{G}$. In particular, 
the automorphism group $\Gamma(\po)$ of the original polytope $\po$ can be viewed as a subgroup of $\Gamma_{p}(\mathcal{G})$ and hence of $\Gamma(\po_{\mathcal{G}})$, since every automorphism of $\po$ induces (faithfully) an adjacency preserving bijection of $\fl(\po)$ and hence a color preserving automorphism of $\mathcal{G}$. 
Conversely, a color preserving automorphism of $\mathcal{G}$ is a bijection of its vertex set that preserves the colors of edges. In other words, a color preserving automorphism of $\mathcal{G}$ is a bijection of $\mathcal{F}(\mathcal{P})$ that preserves $i$-adjacency of flags for each $i$, and hence comes from an automorphism of $\po$. Thus $\Gamma_{p}(\mathcal{G})=\Gamma(\po)$.

The next theorem determines the structure of $\Gamma(\po_{\mathcal{G}})$ under mild conditions on $\po$. The {\em $(i,i+1)$-face layer graph\/} of an $n$-polytope $\po$ is the bipartite graph whose vertices are the faces of $\po$ of ranks $i$ and $i+1$ and whose edges represent incidence in $\po$ (the medial layer graphs discussed in Monson \& Weiss~\cite{medlay,mow} and Monson, Pisanski, Schulte \& Weiss~\cite{semsym} are examples of face layer graphs). Recall that a {\em duality\/} of a self-dual $n$-polytope $\po$ is a bijection of $\po$ that reverses incidences between faces. Every duality induces a bijection of $\mathcal{F}(\mathcal{P})$ that sends $i$-adjacent flags to $(n-i-1)$-adjacent flags for each $i$, and therefore determines a color respecting automorphism of the flag graph $\mathcal{G}$. By $\bar{\Gamma}(\po)$ we denote the group of all automorphisms and dualities of~$\po$. This can be viewed as a subgroup of $\Gamma(\po_{\mathcal{G}})$. Clearly, $\bar{\Gamma}(\po) = \Gamma(\po)$ if $\po$ is not self-dual.

\begin{theorem}
Let $\po$ be an $n$-polytope such that none of its $(i,i+1)$-face layer graphs, with $i=0,\ldots,n-2$, is a complete bipartite graph. Let $\po_{\mathcal{G}}$ be the colorful polytope arising from the flag-adjacency graph $\mathcal{G}$ of $\po$. Then $\Gamma(\po_{\mathcal{G}}) = \bar{\Gamma}(\po)$. 
\end{theorem}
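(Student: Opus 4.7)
The plan is to prove $\Gamma(\po_{\mathcal{G}}) = \bar{\Gamma}(\po)$ by establishing both inclusions. The forward inclusion $\bar{\Gamma}(\po) \subseteq \Gamma(\po_{\mathcal{G}})$ has essentially been noted in the preamble: any automorphism of $\po$ gives a color-preserving automorphism of the flag graph $\mathcal{G}$, and any duality of $\po$ gives a color-respecting automorphism of $\mathcal{G}$ whose associated color permutation is the rank-reversal $i \mapsto n-1-i$; by Theorem~\ref{automor}, both lie in $\Gamma(\po_{\mathcal{G}}) = \Gamma_{c}(\mathcal{G})$.

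For the reverse inclusion, I will take $\phi \in \Gamma(\po_{\mathcal{G}}) = \Gamma_{c}(\mathcal{G})$ and write $\bar{\phi} := \kappa(\phi)$ for the induced permutation of the color set $R = \{0,\ldots,n-1\}$. The key observation is that the connected components of $\mathcal{G}$ after deletion of the color-$i$ edges are in bijection with the $i$-faces of $\po$ (two flags lie in the same component if and only if they share the same $i$-face, by the strong flag connectedness). Since $\phi$ carries color-$i$ edges to color-$\bar{\phi}(i)$ edges, it induces a bijection $\tilde{\phi}$ on the face set of $\po$ that sends $i$-faces to $\bar{\phi}(i)$-faces and preserves incidence. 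If $\bar{\phi}$ is the identity then $\tilde{\phi}$ is rank- and order-preserving, hence an element of $\Gamma(\po)$; if $\bar{\phi}$ is the reversal then $\tilde{\phi}$ is rank-reversing, and because incident faces remain comparable in $\po$ with their ranks swapped, $\tilde{\phi}$ is also order-reversing, hence a duality. Thus it suffices to prove that $\bar{\phi}$ must be either the identity or the reversal.

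To constrain $\bar{\phi}$ I will examine the 2-color subgraphs of $\mathcal{G}$. For each pair $\{a,b\}$ of colors, the subgraph $\mathcal{G}_{a,b}$ is $2$-regular and hence a disjoint union of even cycles. When $|a-b| \geq 2$ the $a$- and $b$-adjacencies of flags commute, so every cycle of $\mathcal{G}_{a,b}$ has length $4$. When $\{a,b\} = \{i,i+1\}$, each cycle of $\mathcal{G}_{i,i+1}$ is the flag circuit of a 2-section $G/F$ of $\po$ with $F$ of rank $i-1$ and $G$ of rank $i+2$: a $p$-gonal 2-section produces a cycle of length $2p$, so of length $\geq 6$ unless the 2-section is a digon. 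Because $\phi$ carries $\mathcal{G}_{i,i+1}$ isomorphically onto $\mathcal{G}_{\bar{\phi}(i),\bar{\phi}(i+1)}$, the presence of a cycle of length $\geq 6$ in $\mathcal{G}_{i,i+1}$ forces $\{\bar{\phi}(i),\bar{\phi}(i+1)\}$ to be consecutive in $R$. If this holds for every $i \in \{0,\ldots,n-2\}$, then $\bar{\phi}$ is an automorphism of the path $0-1-\cdots-(n-1)$ and is therefore the identity or the reversal.

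The main obstacle is thus to extract, from the layer-graph hypothesis, the existence for each $i$ of at least one non-digonal 2-section at ranks $(i-1,i,i+1,i+2)$. I will argue the contrapositive: if every such 2-section is a digon, then the $(i,i+1)$-face layer graph is complete bipartite. The base case $i=0$ rests on a vertex-figure connectivity argument: if every 2-face of $\po$ is a digon, then the 1-faces of the vertex figure at a vertex $v$ only link edges through $v$ sharing the same opposite endpoint, so connectedness of the vertex figure forces $v$ to have all its edges to a single other vertex; running this over all vertices yields exactly two vertices in $\po$, and the $(0,1)$-layer graph is $K_{2,m}$. The case $i=n-2$ is dual. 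The intermediate cases $1 \leq i \leq n-3$ reduce to the base case by applying it to the co-face $F_n/F$ (a polytope of rank $n-i \geq 3$) for each $(i-1)$-face $F$ and, dually, to each $(i+2)$-face $G$ (a polytope of rank $i+2 \geq 3$); the diamond condition on the pairs $(F,K)$ and $(H,G)$ then glues the local consequences into global complete-bipartiteness of the $(i,i+1)$-layer graph. This supplies the missing step, forces $\bar{\phi}$ to be the identity or the reversal, and completes the proof.
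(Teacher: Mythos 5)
Your proof is correct and follows essentially the same route as the paper's: both reduce the problem to showing that the induced color permutation $\bar{\phi}$ must preserve adjacency of ranks, by comparing the $2$-faces of $\po_{\mathcal{G}}$ (equivalently, the $2$-colored cycles of $\mathcal{G}$), which are all squares for non-consecutive colors but, by the layer-graph hypothesis, not all squares for consecutive colors, so that $\bar{\phi}$ is an automorphism of the path $0-1-\cdots-(n-1)$ and hence the identity or the reversal. The only difference is that you spell out two points the paper leaves implicit, namely that a color-reversing flag-graph automorphism induces a duality of $\po$, and that the non-complete-bipartite hypothesis on the $(i,i+1)$-face layer graph guarantees a non-digonal rank-$2$ section between some $(i-1)$-face and $(i+2)$-face.
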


\begin{proof}
We shall consider the $2$-faces of $\po_{\mathcal{G}}$. Each $2$-face is given by a pair $(\{i,j\},\Phi)$, where $\{i, j\}$ is a $2$-subset of $R=\{0,\dots n-1\}$ and $\Phi$ is a vertex of $\mathcal{G}$ (that is, a flag of $\po$). Suppose the 2-face $(\{i, j\},\Phi)$ is a $q_{ij}^{(\Phi)}$-gon; necessarily, $q_{ij}^{(\Phi)}$ is even. Clearly, if $i$ and $j$ are non-adjacent, then $q_{ij}^{(\Phi)}=4$ for all vertices $\Phi$ of $\mathcal{G}$. However, by our assumptions on $\po$, if $i$ and $j$ are adjacent, then there is a vertex $\Phi$ of $\mathcal{G}$ with $q_{ij}^{(\Phi)}>4$. 

Now let $\gamma \in \Gamma(\po_{\cal G})=\Gamma_c({\cal G})$, and let $\bar{\gamma}$ be the permutation of $R$ associated with $\gamma$ considered as a color respecting automorphism of $\mathcal{G}$. When viewed as a polytope automorphism, $\gamma$ maps $2$-faces to $2$-faces and face layer graphs to face layer graphs. In particular, if $\{i, j\}$ is a $2$-subset of $R$, then $\gamma$ takes a typical $2$-face $(\{i,j\},\Phi)$ of $\po_{\cal G}$ to $(\{\bar{\gamma}(i),\bar{\gamma}(j)\},\gamma(\Phi))$. We claim that if $i$ and $j$ are non-adjacent then necessarily $\bar{\gamma}(i)$ and $\bar{\gamma}(j)$ are non-adjacent. This follows from our assumption on the face layer graphs of $\po$. In fact, if $\bar{\gamma}(j)=\bar{\gamma}(i)\pm 1$, then the polytope automorphism $\gamma^{-1}$ of $\po_{\mathcal{G}}$ takes a $2$-face $(\{\bar{\gamma}(i),\bar{\gamma}(j)\},\Psi)$ of $\po_{\mathcal{G}}$ with $q_{\,\bar{\gamma}(i)\bar{\gamma}(j)}^{(\Psi)}>4$ to the $2$-face $(\{i,j\},\gamma^{-1}(\Psi))$ with $q_{ij}^{(\gamma^{-1}(\Psi))}=4$ (bear in mind that $i$ and $j$ are non-adjacent). The existence of a suitable vertex $\Psi$ of $\mathcal{G}$ is guaranteed since the face layer graph of $\po$ corresponding to the adjacent ranks $\bar{\gamma}(i),\bar{\gamma}(j)$ is not a complete bipartite graph.

Now we are nearly done. In fact, since $0$ and $n-1$ are distinguished among the colors by having only one ``adjacent'' color, $\bar{\gamma}$ must either fix or interchange them. Hence $\bar{\gamma}$ must either be the identity map on $R$ or interchange the colors $i$ and $n-1-i$ in $R$ for each $i$; accordingly, $\gamma$ is an automorphism or a duality of the underlying polytope $\po$. 
\end{proof}

In rank $3$, the construction of colorful polytopes (polyhedra) is related to some well-known operations on maps. It is not difficult to see that the colorful polyhedron arising from the flag-adjacency graph of an abstract polyhedron is isomorphic to the truncation of the medial of the original polyhedron (see Hubard, Orbanic \& Weiss~\cite{d-auto} and Orbanic, Pellicer \& Weiss~\cite{korbit} for basic definitions and results about medials and truncations). Alternatively, as mentioned earlier, the colorful polyhedron is isomorphic to the dual of the order complex (combinatorial barycentric subdivision) of the original polyhedron and can be realized as a map on the same underlying surface.

Flag adjacency graphs can also be defined for more general structures than polytopes, such as the incidence complexes studied in~\cite{ds,esch} (or even more general incidence geometries, as described in \cite{vin83a}). For an incidence complex of rank $n$ whose flags have $k_{i}-1$ (say) adjacent flags at level $i$ for each~$i$, its flag graph is again an $r$-regular graph, with $r:=k_{0}+\ldots+k_{n-1}$, whose edges can be labeled with the rank $i$ of the face in which the two corresponding flags differ. Then each color $i$ occurs exactly $k_{i}-1$ times at each vertex, so the edge labeling is not an edge coloring in the usual sense. Nevertheless, the flag graph still gives rise to an incidence structure of rank $n$, in much the same way as the flag graphs for abstract $n$-polytopes.

\section{Colorful polytopes from Cayley graphs}
\label{colcay}

In this section we introduce interesting families of colorful polytopes arising from certain types of Cayley graphs. We also revisit the graphicahedron (see \cite{graph}).

Let $\Gamma$ be a group with a distinguished set $\mathcal{T}:=\{\tau_1,\dots,\tau_n\}$ of (mutually distinct) involutory generators. Then the {\em Cayley graph of $\Gamma$ with respect to $\mathcal{T}$\/},
\[ \mathcal{G}=\mathcal{G}(\Gamma,\mathcal{T}),\]
is the connected $n$-regular graph with vertex set $\Gamma$ in which any two vertices $u$ and $v$ are adjacent if and only if $v=\tau_i u$ for some $i=1,\ldots,n$. There is a natural way of assigning colors from $R:=\{1,\ldots,n\}$ to the edges of $\mathcal{G}$, namely an edge $\{u,v\}$ receives color $i$ if $v=\tau_i u$. Then the $n$ edges emanating from a vertex of $\mathcal{G}$ all have different colors. Thus $\mathcal{G}$ is a properly (edge) colored $n$-regular graph with color set $R$. 

Theorem~\ref{mainthm} tells us that any such Cayley graph $\mathcal{G}$ gives rise to an abstract $n$-polytope $\mathcal{P}_{\mathcal{G}}$ whose $1$-skeleton is isomorphic to $\mathcal{G}$ and whose automorphism group is given by 
the group of color respecting automorphisms $\Gamma_{c}(\mathcal{G})$ of $\mathcal{G}$. This leads us to investigate 
$\Gamma_{c}(\mathcal{G})$.

First note that $\Gamma$ acts faithfully on $\mathcal{G}$ by right multiplication (with the inverse). More precisely, each element $g$ in $\Gamma$ induces a color preserving automorphism $\widehat{g}$ of $\mathcal{G}$ given (on the vertex set) by $\widehat{g}(u):=ug^{-1}$ for $u\in\Gamma$. Thus, if $\widehat{\Gamma}$ denotes the group of all such graph  automorphisms $\widehat{g}$ of $\mathcal{G}$ with $g\in\Gamma$, then $\widehat{\Gamma}$ is a subgroup of $\Gamma_p({\cal G})$ and hence of $\Gamma_c({\cal G})$. Clearly, $\widehat{\Gamma}$ is isomorphic to $\Gamma$.

Let $Aut(\Gamma, {\cal T})$ denote the group of all group automorphisms of $\Gamma$ that map the generating set $\mathcal{T}$ to itself and hence permute the distinguished generators of~$\Gamma$. By definition, each such group automorphism $d$ is a permutation of the vertex set $\Gamma$ of $\mathcal{G}$, and $d(\tau_{i}u) = d(\tau_i)d(u)$ for each $i=1,\ldots,n$ and $u\in\Gamma$. In particular, each edge $\{u,v\}$ of $\mathcal{G}$ of color $i$ is sent by $d$ to the edge $\{d(u),d(v)\}$ of $\mathcal{G}$ of color $\bar{d}(i)$, where $\bar{d}$ is the permutation of the subscripts $\{1,\ldots,n\}$ determined by $d(\tau_i)=\tau_{\bar{d}(i)}$ for each $i$. This shows that $d$ is a color respecting automorphism of~$\cal G$ with associated color permutation $\bar{d}$. Hence, $Aut(\Gamma, {\cal T})$ is also a subgroup of~$\Gamma_c({\cal G})$.

In fact, as the following theorem shows, the full color respecting automorphism group of $\cal G$ is a semidirect product of the  two special subgroups just described. For related results see also Jajcay~\cite{cayley}.

\begin{theorem}
\label{autocayley}
Let $\Gamma$ be a group with a distinguished set $\mathcal{T}:=\{\tau_1,\dots,\tau_n\}$ of involutory generators, and let $\mathcal{G}=\mathcal{G}(\Gamma,\mathcal{T})$ be its Cayley graph. Then
$\Gamma_c({\cal G}) = \Gamma \ltimes Aut(\Gamma, {\cal T}). $
\end{theorem}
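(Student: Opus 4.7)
My plan is to verify the three ingredients required for the semidirect product: trivial intersection of the two subgroups, normality of $\widehat{\Gamma}$, and the fact that every color respecting automorphism of $\mathcal{G}$ factors as a product of an element of $\widehat{\Gamma}$ followed by an element of $\mathrm{Aut}(\Gamma,\mathcal{T})$.

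First I would handle the trivial intersection. If $\widehat{g}\in\widehat{\Gamma}\cap\mathrm{Aut}(\Gamma,\mathcal{T})$, then viewed as a group automorphism $\widehat{g}$ fixes the identity of $\Gamma$; but $\widehat{g}(1)=g^{-1}$, forcing $g=1$. Next I would check normality: for $d\in\mathrm{Aut}(\Gamma,\mathcal{T})$ and $\widehat{g}\in\widehat{\Gamma}$ and any $u\in\Gamma$,
\[ (d\,\widehat{g}\,d^{-1})(u) \,=\, d(d^{-1}(u)\,g^{-1}) \,=\, u\cdot d(g)^{-1} \,=\, \widehat{d(g)}(u),\]
so conjugation by $d$ sends $\widehat{g}$ to $\widehat{d(g)}\in\widehat{\Gamma}$. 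This shows $\mathrm{Aut}(\Gamma,\mathcal{T})$ normalizes $\widehat{\Gamma}$.

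The main step, and the one I expect to be the real work, is the factorization. Given an arbitrary $\alpha\in\Gamma_{c}(\mathcal{G})$, set $g:=\alpha(1)$ and put $\beta:=\widehat{g}\circ\alpha$. Since $\widehat{g}$ is color preserving and $\alpha$ is color respecting, $\beta$ lies in $\Gamma_{c}(\mathcal{G})$, and by construction $\beta(1)=\alpha(1)\,g^{-1}=1$. Let $\bar\beta\in S_{\{1,\dots,n\}}$ be the color permutation induced by $\beta$. The edge of $\mathcal{G}$ from $1$ to $\tau_{i}$ has color $i$, so $\beta$ maps it to an edge from $1$ to $\beta(\tau_{i})$ of color $\bar\beta(i)$, forcing $\beta(\tau_{i})=\tau_{\bar\beta(i)}$; in particular $\beta$ permutes $\mathcal{T}$.

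It remains to see that $\beta$ is a group automorphism. Here I would use the color structure of paths in the Cayley graph. Writing any $u\in\Gamma$ as a word $u=\tau_{i_{k}}\cdots\tau_{i_{1}}$ corresponds to the edge path $1,\tau_{i_{1}},\tau_{i_{2}}\tau_{i_{1}},\dots,u$ whose successive edges are colored $i_{1},\dots,i_{k}$. Applying $\beta$ yields a path from $1$ whose successive edges are colored $\bar\beta(i_{1}),\dots,\bar\beta(i_{k})$, so $\beta(u)=\tau_{\bar\beta(i_{k})}\cdots\tau_{\bar\beta(i_{1})}$. Exactly the same argument, applied to the translated path from any vertex $w$ to $uw$ (which still realises the color sequence $i_{1},\dots,i_{k}$), yields $\beta(uw)=\tau_{\bar\beta(i_{k})}\cdots\tau_{\bar\beta(i_{1})}\beta(w)=\beta(u)\beta(w)$. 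Hence $\beta\in\mathrm{Aut}(\Gamma,\mathcal{T})$, and $\alpha=\widehat{g^{-1}}\beta$ lies in $\widehat{\Gamma}\cdot\mathrm{Aut}(\Gamma,\mathcal{T})$.

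Combining the three ingredients identifies $\Gamma_{c}(\mathcal{G})$ with the semidirect product $\Gamma\ltimes\mathrm{Aut}(\Gamma,\mathcal{T})$, where $\mathrm{Aut}(\Gamma,\mathcal{T})$ acts on $\Gamma\cong\widehat{\Gamma}$ in the natural way $d\cdot g = d(g)$, as determined by the normalization computation above. The only delicate point is the argument that a color respecting automorphism fixing $1$ respects the group multiplication; the key observation making it work is that color sequences along paths translate freely in a Cayley graph because left multiplication commutes with right multiplication.
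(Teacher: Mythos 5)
Your proposal is correct and follows essentially the same route as the paper: reduce to an automorphism fixing the identity by composing with $\widehat{g}$ for $g=\alpha(1)$, show it permutes $\mathcal{T}$ and is multiplicative (the paper does this via the single relation $d(\tau_i u)=d(\tau_i)d(u)$ on edges and then invokes generation by $\mathcal{T}$, which is exactly the word-length induction you spell out with colored paths), and then verify normality of $\widehat{\Gamma}$ and trivial intersection by the same computations. No gaps.
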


\begin{proof}
We show that $\Gamma_c({\cal G})$ is an internal semi-direct product of its two subgroups $\widehat{\Gamma}$ and
$Aut(\Gamma, {\cal T})$. 

First we establish that together these subgroups generate $\Gamma_c({\cal G})$. Let $\gamma \in \Gamma_c({\cal G})$. Define $g$ in $\Gamma$ by $g:=\gamma(\varepsilon)$, where $\varepsilon$ is the unit element of $\Gamma$ (here viewed as a vertex of $\mathcal{G}$). Now consider the element $d:=\widehat{g}\,\gamma$ of $\Gamma_c({\cal G})$. We prove that its restriction to the vertex-set of $\mathcal{G}$, again denoted by $d$, is a group automorphism of $\Gamma$ permuting the generators in $\mathcal{T}$; that is, $d\in Aut(\Gamma, {\cal T})$. First note that 
\[ d(\varepsilon)=\widehat{g}(\gamma(\varepsilon))= \widehat{g}(g) = gg^{-1}=\varepsilon, \]
so $d$ fixes $\varepsilon$. Moreover, since $d$ lies in $\Gamma_c({\cal G})$, there exists a permutation $\bar{d}$ of the subscripts $\{1,\ldots,n\}$ such that $d(\tau_i)=\tau_{\bar{d}(i)}$ for each $i$; in fact, since $\varepsilon$ is fixed under $d$, each edge $\{\varepsilon,\tau_i\}$ of $\mathcal{G}$ with color $i$ is sent by $d$ to an edge $\{\varepsilon,\tau_{i'}\}$ of $\mathcal{G}$ with color $i'=:\bar{d}(i)$. In particular, $d$ permutes the generators in $\mathcal{T}$. More generally, for each $i$ and each $u\in\Gamma$, the edge $\{u,\tau_{i}u\}$ with color $i$ is mapped by the color respecting automorphism $d$ to the edge $\{d(u),d(\tau_{i}u)\} = \{d(u),\tau_{\bar{d}(i)}d(u)\}$ with color $\bar{d}(i)$, so in particular $d(\tau_{i}u)=d(\tau_{i})d(u)$. Since $\Gamma$ is generated by $\mathcal{T}$, this then implies that $d(uv)=d(u)d(v)$ for every $u,v \in \Gamma$, so $d$ is a homomorphism on $\Gamma$.  Hence $d \in Aut(\Gamma, {\cal T})$ and $\gamma=\widehat{g}^{-1}d\in \widehat{\Gamma}\cdot Aut(\Gamma, {\cal T})$. In particular, every element of $\Gamma_c({\cal G})$ can be written as a product of an element of $\widehat{\Gamma}$ and an element of $Aut(\Gamma, {\cal T})$.

Next we show that $\widehat{\Gamma}$ is normal in $\Gamma_c({\cal G})$. Since $\widehat{\Gamma}$ and $Aut(\Gamma, {\cal T})$
together generate $\Gamma_c({\cal G})$, it suffices to show that $\widehat{\Gamma}$ is normalized by $Aut(\Gamma, {\cal T})$.
Now, if $g,u\in\Gamma$ and $d \in Aut(\Gamma, {\cal T})$, then 
\[\begin{array}{llllllllll}
d\widehat{g}d^{-1}(u) &\!\!=\!\!& d(\widehat{g}(d^{-1}(u))) &\!\!=\!\!&d(d^{-1}(u)g^{-1}) \\
&&&\!\!=\!\!& d(d^{-1}(u))\, d(g^{-1}) &\!\! = \!\!& u\,d(g)^{-1} &\!\!=\!\!& \widehat{d(g)}(u). 
\end{array} \]
Hence $d\widehat{g}d^{-1}=\widehat{d(g)}$ for each $g\in\Gamma$. Thus $Aut(\Gamma,{\cal T})$ normalizes $\widehat{\Gamma}$. 

It remains to prove that the subgroups $\widehat{\Gamma}$ and $Aut(\Gamma, {\cal T})$ intersect trivially. Let $d\in \widehat{\Gamma}\cap Aut(\Gamma, {\cal T})$, and let $d=\widehat{g}$ with $g\in\Gamma$. Then
$\varepsilon = d(\varepsilon) = \widehat{g}(\varepsilon) = \varepsilon g^{-1}$,
so $g=\varepsilon$ and $d$ is the identity automorphism of $\mathcal{G}$. This completes the proof.
\end{proof}

A simple example of a colorful polytope associated with a Cayley graph is the $n$-dimensional cube. It is derived from the Cayley graph for the elementary abelian group $\mathbb{Z}_2^n$ with its $n$ canonical generators.

More exciting examples arise from Cayley graphs of symmetric groups. Recall from \cite{graph,symgra} that the graphicahedron associated with a given finite graph is an abstract polytope generalizing the well-known permutahedron. The permutahedron $\Pi_{n}$ can be described as the $n$-dimensional simple convex polytope whose $(n+1)!$ vertices are the points in $\mathbb{R}^{n+1}$ obtained from $(1,2,\ldots,n+1)$ by permutation of the coordinates. These vertices can be identified with the elements of the symmetric group $S_{n+1}$ in such a way that two vertices of $\Pi_{n}$ are connected by an edge if and only if the corresponding permutations differ by an adjacent transposition. The permutahedron was apparently first investigated by Schoute in 1911 (see \cite{S11,Z95}); it was rediscovered in Guilbaud \& Rosenstiehl~\cite{guro} in 1963 and given the name ``permutohedron'' (or ``permuto\`{e}dre'', in French). 
 
The construction of the $G$-graphicahedron associated with a given finite graph $G$ is based on the following Cayley graph derived from $G$. This Cayley graph (but not its respective graphicahedron) was also studied in Doignon \& Huybrechts~\cite{doignon}. 

Suppose $G$ is a finite simple graph with vertex set $V(G):=\{1,\ldots,p\}$ and edge set $E(G)=\{e_1,\ldots,e_q\}$, where $p\geq 1$ and $q\geq 0$ (if $q=0$ then $E(G)=\emptyset$). We associate with $G$ a Cayley graph on the symmetric group $S_p\,(=\Gamma)$ as follows. If $e = \{i,j\}$ is an edge of $G$, define ${\tau}_{e}:= (i\;j)$; this is the transposition in $S_p$ that interchanges $i$ and $j$. Let ${\cal T}_G:=\{\tau_{e_1},\ldots,\tau_{e_q}\}$ denote the set of transpositions determined by the edges of $G$, and let 
\[ {\cal G}_G:={\cal G}(S_p,{\cal T}_G)\] 
be the Cayley graph of $S_p$ with respect to ${\cal T}_G$. Then the vertex set of ${\cal G}_G$ is $S_p$, and $\{\gamma_1,\gamma_2\}$ is an edge of ${\cal G}_G$ if and only if $\tau_{e}\gamma_{1} =\gamma_2$ for some $e\in E(G)$. 

The Cayley graph ${\cal G}_{G}$ associated with a connected graph $G$ as above is a properly edge colored $q$-regular graph with color set $R=E(G)$. The corresponding colorful polytope $\mathcal{P}_{{\cal G}_G}$ is an abstract polytope of rank $q$ known as the {\em $G$-graphicahedron\/}, and $\mathcal{G}_G$ is its $1$-skeleton. 

If $G$ is a simple path with $q$ edges, then $\mathcal{P}_{{\cal G}_G}$ is just the $q$-dimensional permutahedron $\Pi_q$; this is a hexagon when $q=2$. More interestingly, if $G$ is a $q$-cycle, then $\mathcal{P}_{{\cal G}_G}$ is a tessellation of the $(q-1)$-dimensional torus by $(q-1)$-dimensional permutahedra intimately related to the geometry of the infinite euclidean Coxeter group $\widetilde{A}_{q-1}$ and the corresponding root lattice (see \cite{symgra}).

Recall from \cite[Theorem 5.1]{graph} that the polytope automorphism group of a general $G$-graphicahedron $\mathcal{P}_{{\cal G}_G}$ is given by $\Gamma({\cal P}_{{\cal G}_G}) = S_{p}\ltimes\Gamma(G)$. This is consistent with Theorems~\ref{automor} and \ref{autocayley}. In fact, we have
\[ \Gamma({\cal P}_{{\cal G}_G}) = \Gamma_{c}(\mathcal{G}_G) 
= S_p \ltimes Aut(S_p, {\cal T}_G) = S_{p}\ltimes\Gamma(G) .\]

It is worth noting that, if (and only if) $G$ is the complete graph on $p$ vertices, the mapping $\alpha\rightarrow \alpha^{-1}$ on $S_p$ induces a graph automorphism $\iota$ on the Cayley graph ${\cal G}_{G}$. This follows from the invariance of the edge set $E(G)$ under $\Gamma(G)=S_p$; in fact, if $\alpha,\beta\in S_p$, $e\in E(G)$, and $\beta=\tau_e\alpha$, then $\beta^{-1} = (\alpha^{-1}\tau_{e}\alpha)\alpha^{-1}=\tau_{\alpha^{-1}(e)}\alpha^{-1}$. However, this graph automorphism is not color respecting and hence does not give rise to a polytope automorphism; in fact, since the generators $\tau_e$ (the neighbors of $\varepsilon$ in $\mathcal{G}_G$) are invariant under $\iota$, the respective permutation of the color set $E(G)$ would necessarily have to be the identity permutation, which on the other hand leads to contradictions for the edge colors at other vertices. It was shown in \cite{doignon} that the full graph automorphism group of $\mathcal{G}_G$ in this case is $(S_{p}\ltimes S_p)\ltimes C_2$, with the factor $C_2$ generated by $\iota$.
\smallskip 

The degree of symmetry of a colorful polytope $\po_\mathcal{G}$ associated with a general Cayley graph $\mathcal{G}=\mathcal{G}(\Gamma,\mathcal{T})$ is determined by $Aut(\Gamma, {\cal T})$, the group of all group automorphisms of $\Gamma$ that permute the distinguished generators in~$\mathcal{T}$. In fact, by Theorems~\ref{automor} and~\ref{autocayley}, 
\begin{equation}
\Gamma(\mathcal{P}_{\mathcal{G}}) = \Gamma_{c}(\mathcal{G})
= \Gamma \ltimes Aut(\Gamma, {\cal T}) .
\end{equation}
This leads to the following interesting special case.

\begin{remark}
\label{rema}
When $Aut(\Gamma, {\cal T})$ is trivial, the group $\Gamma(\mathcal{P}_{\mathcal{G}})$ is isomorphic to $\Gamma$ and acts regularly on the vertices of $\mathcal{P}_{\mathcal{G}}$. The $G$-graphicahedra derived from a symmetry-free graph $G$ (with trivial graph automorphism group) are examples of colorful polytopes exhibiting this property.
\end{remark}

Other polytopes with a simply vertex-transitive action (that is, regular action on the vertices) are given by the polytopes $2^\mathcal{K}$, where $\mathcal{K}$ has trivial automorphism group (see \cite[8D]{McMS02} and \cite{psw}).

We remark that the symmetric groups that occur as automorphism groups of regular polytopes of various ranks were recently studied in Fernandes \& Leemans~\cite{ferlee}.

\section{The monodromy polytope}
\label{mono}

In this section we describe the colorful polytopes obtained from the Cayley graphs of the monodromy groups of polytopes.

Let $\mathcal{P}$ be an abstract $n$-polytope. The {\em universal} string Coxeter group $W := [\infty,\ldots,\infinite]$ of rank $n$, with distinguished involutory generators $s_0,s_1,\ldots,s_{n-1}$ and defining relations 
\[ s_{i}^{2} = (s_{i}s_{j})^{2} = \varepsilon \;\; \textrm{ for } i,j =0,\dots,n-1, \textrm{ with } i<j-1,\]
acts transitively on the set of flags $\fl(\po)$ of $\po$ (on the right) in such a way that $\Psi \cdot s_i = \Psi^i$, the $i$-adjacent flag of $\Psi$, for each $i=0,\dots,n-1$ and each $\Psi\in\fl(\po)$. In particular, if $w=s_{i_1}s_{i_2}\ldots s_{i_k} \in W$ then 
\[ \Psi \cdot w = (\Psi \cdot s_{i_1}s_{i_2}\ldots s_{i_{k-1}})\cdot s_{i_{k}} 
= (\Psi^{i_1,i_2,\ldots,i_{k-1}})^{i_k} =: \Psi^{i_1,i_2,\ldots,i_{k-1},i_k} \]
(see Hartley~\cite{hartley} and Hubard, Orbanic \& Weiss~\cite{d-auto}). This action defines a natural homomorphism 
\[\mu:W\rightarrow S_{\fl(\po)}\] 
from $W$ into the symmetric group $S_{\fl(\po)}$ on $\fl(\po)$. The {\em monodromy group} of $\po$, denoted $\Mon(\po)$, is the quotient of $W$ by the kernel $K$ of $\mu$, the normal subgroup of $W$ consisting of those elements that act trivially on $\fl(\po)$ (that is, fix every flag of $\po$). Then $\Mon(\po)$ is isomorphic to the image of $\mu$, and can be identified with this subgroup of $S_{\fl(\po)}$ whenever convenient. Let 
\[ \pi: W \to \Mon(\po)=W/K \] 
denote the canonical epimorphism. The transitive right action of $W$ on $\fl(\po)$ also induces a transitive right action of $\Mon(\po)$ on $\fl(\po)$ such that $\Psi\cdot {\pi(w)}=\Psi\cdot w$ for each $w\in W$ and $\Psi\in\fl(\po)$, and in particular $\Psi\cdot {\pi(s_i)}=\Psi^{i}$ for each $i$. We slightly abuse notation and let $s_i$ also denote the $i$-th generator $\pi(s_i)$ of $\Mon(\po)$ and $w$ also the element $\pi(w)=wK$ of $\Mon(\po)$. Observe that by definition of $K$ the action of $\Mon(\po)$ on $\fl(\po)$ is faithful, since only the unit element of $\Mon(\po)$ fixes every flag. 

The monodromy group $\Mon(\po)$ of an abstract polytope $\po$ is known to be isomorphic to the automorphism group $\Gamma(\po)$ of $\po$ if and only if $\po$ is a regular polytope (see \cite{hartley}). 

The monodromy group $\Mon(\po)$ and its generating set ${\cal T}=\{s_0,s_1,\ldots,s_{n-1}\}$ naturally give rise to a 
Cayley graph 
\[ \mathcal{M}:=\mathcal{G}(\Mon(\po),\mathcal{T}) \] 
associated with $\po$. Then Theorem~\ref{mainthm} says that $\mathcal{M}$ is the 1-skeleton of the corresponding colorful polytope $\po_{\mathcal{M}}$ of rank $n$, which we have named the {\em monodromy polytope\/} of $\po$. Its automorphism group $\Gamma(\po_{\mathcal{M}})$ is isomorphic to the group of color respecting automorphisms $\Gamma_{c}(\mathcal{M})$ of $\mathcal{M}$, by Theorem~\ref{automor}. Taking into account that the properly (edge) colored graph $\mathcal{M}$ is also a Cayley graph, we now can apply Theorem~\ref{autocayley} to conclude that $\Gamma(\po_{\mathcal{M}})$ is a semidirect product of $\Mon(\po)$ and $Aut(\Mon(\po),{\cal T})$, the group of automorphisms of $\Mon(\po)$ that permute the generators in $\cal T$. 

For an $n$-polytope $\po$, the admissible permutations of the generators $s_0,\ldots,s_{n-1}$ for group automorphisms of its monodromy group $\Mon(\po)$ are quite restricted. First observe that, for each $i=0,\ldots,n-2$, the product $s_{i}s_{i+1}$ of two consecutive generators of $\Mon(\po)$ has period $2$ if and only if every $i$-face of $\po$ is incident with every $(i+1)$-face of $\po$, or equivalently, if and only if every section of $\po$ of rank $2$ determined by an $(i-1)$-face and an $(i+2)$-face is a digon $\{2\}$. Thus, except in very degenerate situations, all products of consecutive generators of $\Mon(\po)$ have period greater than $2$. We shall require this property from now on. (Then, if $p_{i+1}$ denotes the period of $s_{i}s_{i+1}$ in $\Mon(\po)$ for $i=0,\ldots,n-2$, the string diagram of the underlying Coxeter group $[p_{1},\ldots,p_{n-1}]$ is connected.) As an immediate consequence, the elements $s_0$ and $s_{n-1}$ are distinguished among the generators (as representing the ends of the string), and any group automorphism of $\Mon(\po)$ that permutes the generators must necessarily fix or interchange $s_0$ and $s_{n-1}$ and then be uniquely determined. Hence any non-trivial group automorphism of $\Mon(\po)$ that permutes the generators must necessarily reverse the order of the generators. It follows that $Aut(\Mon(\po),{\cal T})$ is trivial or is a group of order $2$.

In summary, we have established the following theorem. Recall the notion of an {\em $(i,i+1)$-face layer graph\/} from Section~\ref{flagpo}.

\begin{theorem}
Let $\po$ be an $n$-polytope such that none of its $(i,i+1)$-face layer graphs, with $i=0,\ldots,n-2$, is a complete bipartite graph. Let $\po_{\mathcal{M}}$ be the monodromy polytope, the colorful polytope arising from the Cayley graph $\mathcal{M}$ of the monodromy group $\Mon(\po)$ of $\po$ with its canonical generators $s_0, \dots s_{n-1}$. Then $\Gamma(\po_{\mathcal{M}})$ is isomorphic to  $\Mon(\po) \ltimes C_2$ or $\Mon(\po)$, according as $\Mon(\po)$ does, or does not, admit a group automorphism sending $s_i$ to $s_{n-i-1}$ for each $i=1,\ldots,n$.
\end{theorem}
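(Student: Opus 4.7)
The plan is to chain together the two main results established earlier and then analyze the remaining factor. Specifically, by Theorem~\ref{automor} applied to the colorful polytope $\po_\mathcal{M}$, one has $\Gamma(\po_\mathcal{M}) = \Gamma_c(\mathcal{M})$. Since by construction $\mathcal{M} = \mathcal{G}(\Mon(\po),{\cal T})$ is a Cayley graph of $\Mon(\po)$ with respect to the involutory generating set ${\cal T} = \{s_0, \ldots, s_{n-1}\}$, Theorem~\ref{autocayley} further refines this to
\[ \Gamma(\po_\mathcal{M}) \;=\; \Mon(\po) \ltimes Aut(\Mon(\po),{\cal T}). \]
Thus the theorem reduces to the claim that $Aut(\Mon(\po),{\cal T})$ has order $1$ or $2$, with the nontrivial element (when it exists) realizing the order-reversing permutation $s_i \mapsto s_{n-1-i}$ of the generators.

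To prove this, I would first translate the hypothesis on the face layer graphs into an order condition on consecutive generators, as already outlined in the paragraph preceding the theorem: if no $(i,i+1)$-face layer graph of $\po$ is complete bipartite, then for each $i=0,\ldots,n-2$ the product $s_i s_{i+1}$ has period strictly greater than $2$ in $\Mon(\po)$. This step amounts to computing the effect of $(s_i s_{i+1})^2$ on an arbitrary flag $\Phi$ and observing that a complete bipartite face layer graph is forced whenever every such product returns $\Phi$ to itself. On the other hand, the relations $(s_i s_j)^2 = \varepsilon$ with $|i-j| \geq 2$ already hold in the universal string Coxeter group $W$ and therefore in its quotient $\Mon(\po)$. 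Hence the pairs $(s_i, s_j)$ with $|i-j| = 1$ are characterized intrinsically in $\Mon(\po)$ as exactly those generator pairs whose product has order greater than $2$.

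Now let $d \in Aut(\Mon(\po),{\cal T})$ and let $\bar d$ be the permutation of $\{0,\ldots,n-1\}$ defined by $d(s_i) = s_{\bar d(i)}$. Since $d$ is a group automorphism, it preserves the order of every element of $\Mon(\po)$, so by the previous step $\bar d$ preserves the adjacency relation $|i-j|=1$. In graph-theoretic terms, $\bar d$ is an automorphism of the path $0 - 1 - \cdots - (n-1)$, hence either the identity or the reversal $i \mapsto n-1-i$. Moreover, because $\Mon(\po)$ is generated by ${\cal T}$, $d$ is completely determined by $\bar d$. Therefore $Aut(\Mon(\po),{\cal T})$ has order at most $2$, with the nontrivial case arising precisely when the assignment $s_i \mapsto s_{n-1-i}$ extends to a group automorphism of $\Mon(\po)$. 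Combined with the semidirect-product decomposition above, this yields the theorem.

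The only genuinely non-routine step I foresee is the translation between the face layer graph hypothesis and the condition $|s_i s_{i+1}| > 2$ in $\Mon(\po)$; once this is in hand, everything else is a clean combination of Theorems~\ref{automor} and~\ref{autocayley} with the elementary observation that any generator-permuting automorphism of $\Mon(\po)$ must respect the path structure of the underlying string Coxeter diagram.
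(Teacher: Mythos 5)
Your proposal is correct and follows essentially the same route as the paper: chain Theorem~\ref{automor} with Theorem~\ref{autocayley} to get $\Gamma(\po_{\mathcal{M}}) = \Mon(\po) \ltimes Aut(\Mon(\po),\mathcal{T})$, use the face-layer-graph hypothesis to show every product $s_i s_{i+1}$ has period greater than $2$ while non-consecutive generators commute, and conclude that any generator-permuting automorphism induces an automorphism of the string (path) diagram, hence is trivial or the order-reversal. Your phrasing via path-graph automorphisms is slightly more explicit than the paper's observation that $s_0$ and $s_{n-1}$ are distinguished as the ends of the string, but the argument is the same.
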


We conclude this section by establishing a covering relationship between the monodromy polytope and the flag adjacency polytope of a given polytope $\po$. This is based on a similar relationship between the Cayley graph $\mathcal{M}$ of the monodromy group of $\po$, and the flag graph of $\po$ described in Section~\ref{flagpo}. We begin by studying the graph covering.

Recall that a graph with a vertex partition invariant under some given subgroup of its graph automorphism group naturally gives rise to a new graph, a quotient graph, whose vertices are the members of the partition and whose edges join two members of the partition precisely when they contain vertices joined in the original graph. Then the original graph is said to be an {\em imprimitive cover\/} of the new graph (see \cite{praeger}).

\begin{propo}
\label{covgr}
Let $\po$ be a polytope of rank $n$, let $\cal G$ be the flag graph of $\po$, and let $\cal M$ be the Cayley graph of the monodromy group $\Mon(\po)$ of $\po$ with canonical generators~$s_0,\dots s_{n-1}$. Then $\cal M$ is an imprimitive cover of $\cal G$ (as a properly (edge) colored graph). In particular, ${\cal M}$ and $\cal G$ are isomorphic (as properly (edge) colored graphs) if and only if $\po$ is a regular polytope.
\end{propo}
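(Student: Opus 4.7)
The plan is to exhibit an explicit surjective color preserving graph morphism $f\colon\mathcal{M}\to\mathcal{G}$ whose fiber decomposition realizes $\mathcal{M}$ as the desired imprimitive cover, and then to read off the isomorphism case from the size of a single fiber. Fix a base flag $\Phi_{0}\in\fl(\po)$, let $H:=\mathrm{Stab}_{\Mon(\po)}(\Phi_{0})$, and set $f(w):=\Phi_{0}\cdot w^{-1}$ for $w\in\Mon(\po)$. Transitivity of the right action of $\Mon(\po)$ on $\fl(\po)$ makes $f$ surjective, and the orbit-stabilizer theorem identifies the fibers of $f$ with the right cosets $wH$.

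The central computation is that $f$ respects colored edges. A color-$i$ edge of $\mathcal{M}$ has the form $\{w,s_{i}w\}$, and since $(s_{i}w)^{-1}=w^{-1}s_{i}$, associativity of the right action yields $f(s_{i}w)=(\Phi_{0}\cdot w^{-1})\cdot s_{i}=f(w)^{i}$; thus $f$ sends this edge to the color-$i$ edge $\{f(w),f(w)^{i}\}$ of $\mathcal{G}$. The fiber partition $\{wH\}$ is invariant under the color preserving subgroup $\widehat{H}\subset\Gamma_{p}(\mathcal{M})$ provided by Theorem~\ref{autocayley}, since $\widehat{h}(wH)=wHh^{-1}=wH$. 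Consequently, the quotient graph has vertex set in bijection with $\fl(\po)$ via $f$, and two cosets $w_{1}H$, $w_{2}H$ are joined by a color-$i$ edge iff $w_{2}H=s_{i}w_{1}H$, which via $f$ corresponds precisely to $i$-adjacency in $\mathcal{G}$. This identifies the quotient graph with $\mathcal{G}$ as a properly (edge) colored graph, so $\mathcal{M}$ is an imprimitive cover of $\mathcal{G}$.

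For the ``in particular'' clause, $\mathcal{M}$ and $\mathcal{G}$ are isomorphic as properly edge colored graphs iff $f$ is a bijection iff $|H|=1$, i.e., iff $\Mon(\po)$ acts simply transitively on $\fl(\po)$. Combining the chain $|\Gamma(\po)|\le|\fl(\po)|\le|\Mon(\po)|$ with Hartley's theorem quoted earlier ($\Gamma(\po)\cong\Mon(\po)$ iff $\po$ is regular), all three cardinalities collapse to equalities exactly when $\po$ is regular; equivalently, the centralizer of a simply transitive $\Mon(\po)$-action on $\fl(\po)$ consists of polytope automorphisms and is itself transitive, forcing $\po$ to be regular.

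The main subtlety, and the only place where care is needed, is the choice of $w^{-1}$ rather than $w$ in the definition of $f$. The right $\Mon(\po)$-action on flags, together with the left-multiplication convention defining the edges of a Cayley graph in this paper, forces this inversion; without it, neither the color-preservation identity $f(s_{i}w)=f(w)^{i}$ nor the $\widehat{H}$-invariance of the fibers would hold simultaneously.
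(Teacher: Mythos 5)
Your proof is correct and takes essentially the same route as the paper's: both partition the vertex set of $\mathcal{M}$ into the cosets $wH$ of the base-flag stabilizer $H$ and identify the resulting quotient with $\mathcal{G}$ via $w\mapsto \Phi_{0}\cdot w^{-1}$, using $(s_{i}w)^{-1}=w^{-1}s_{i}$ to match color-$i$ edges with $i$-adjacency of flags. (Only a cosmetic slip: the fibers $wH$ are \emph{left} cosets of $H$, not right cosets; and your centralizer argument for the ``in particular'' clause just fills in a detail the paper states without proof.)
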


\begin{proof}
Recall that the vertices of $\mathcal{G}$ are just the flags of $\po$, with an edge of color~$i$ between two vertices of $\mathcal{G}$ if and only if the corresponding flags of $\mathcal{P}$ are $i$-adjacent. On the other hand, the vertices of $\mathcal{M}$ are just the elements of $\Mon(\po)$, with an edge of color $i$ between two vertices $u$ and $v$ of $\mathcal{M}$ if and only if $v= s_{i}u$ in $\Mon(\po)$; the latter just says that $\Psi\cdot v  = \Psi^{i}\cdot u$ for each flag $\Psi$ of $\po$. However, in general the action of $\Mon(\po)$ on $\fl(\po)$ need not be free (semi-regular), indicating that $\cal M$ might have more vertices than $\cal G$. In fact, $\Mon(\po)$ acts freely on $\fl(\po)$ (and then $\cal M$ and $\cal G$ are isomorphic) if and only if $\po$ is a regular polytope.

In general the relationship between $\mathcal{M}$ and $\mathcal{G}$ can be described as follows. Let $\Phi$ be a fixed, or {\em base}, flag of $\po$, and let $H$ denote the stabilizer of $\Phi$ under the action of $\Mon(\po)$ on $\fl(\po)$. Clearly, the left cosets of $H$ in $\Mon(\po)$ form an $H$-invariant partition of $\Mon(\po)$, the vertex set of $\cal M$; note here that $H$ acts on $\mathcal{M}$ (by multiplication with the inverse on the right) as a group of graph automorphisms. 

Now consider the corresponding quotient graph ${\cal M}_H$ of $\mathcal{M}$ determined by $H$. The vertices of ${\cal M}_H$ are the left cosets $uH$ with $u \in \Mon(\po)$, and $\{uH,vH\}$ is an edge of $\mathcal{M}_H$ if and only if there exist $u'$ and $v'$ in $\Mon(\po)$ such that $uH=u'H$, $vH=v'H$, and $\{u',v'\}$ is an edge of $\cal M$ (that is, $v'=s_{i}u'$ in $\Mon(\po)$ for some~$i$). The new graph $\mathcal{M}_H$ inherits a natural edge coloring from $\mathcal{M}$, with the same color set $\{0,\ldots,n-1\}$ as $\mathcal{M}$, making it a properly (edge) colored graph; in fact, if $\{uH,vH\}$ is an edge of $\mathcal{M}_H$ and $\{u',v'\}$ as above is an edge of $\mathcal{M}$ of color $i$, then we can safely assign the color $i$ to $\{uH,vH\}$. Note here that the color of an edge $\{uH,vH\}$ is indeed well-defined; that is, if $\{uH,vH\}$ is represented by another pair $u'',v''$ such that $uH=u''H$, $vH=v''H$, and $\{u'',v''\}$ is an edge of $\cal M$ of color $j$, then necessarily $j=i$. This is implied by the definition of $H$, as we will see shortly. Then it will be clear that $\mathcal{M}$ is an imprimitive cover of $\mathcal{M}_H$.

Next observe that the mapping 
\begin{equation}
\label{bij}
uH \rightarrow \Phi\cdot u^{-1} \quad (u\in \Mon(\po)) 
\end{equation}
is a well-defined bijection $\varphi$ between the vertex-sets of $\mathcal{M}_H$ and $\mathcal{G}$. Bear in mind here that $H$ is the stabilizer of the base flag $\Phi$ under the action of $\Mon(\po)$, so in particular $\Phi \cdot u^{-1}= \Phi\cdot u'^{-1}$ if $u,u'\in\Mon(\po)$ and $uH=u'H$. 

This bijection between the vertex sets naturally extends to a full graph automorphism $\phi: {\cal M}_H \rightarrow \mathcal{G}$ between $\mathcal{M}_H$ and $\mathcal{G}$, which again was denoted by $\varphi$. To see this, let $\{uH,vH\}$ be an edge of ${\cal M}_H$ of color $i$, and let $\{u',v'\}$ as above be a corresponding edge of $\mathcal{M}$ of color $i$ so that $v'=s_{i}u'$ in $\Mon(\po)$. Then  
\begin{equation}
\label{bijedge}
\begin{array}{lllllllllllll}
\varphi(vH)&\!\!\!=\!\!\!& \!\Phi\cdot v^{-1} &\!\!\!=\!\!\!&\! \Phi\cdot v'^{\,-1} 
&\!\!\!=\!\!\!&\Phi\cdot (u'^{\,-1}s_{i})&\!\!\!=\!\!\!&(\Phi \cdot u'^{\,-1})\cdot s_i\\[.05in]
&&&&&\!\!\! =\! \!\!&(\varphi(u'H))\cdot s_i  &\!\!\!=\!\!\!& (\varphi(u'H))^i &\!\!\!\!=\!\!\!&(\varphi(uH))^i.
\end{array} 
\end{equation}
Hence, if $\{uH,vH\}$ is an edge of ${\cal M}_H$ of color $i$, then $\varphi(vH)$ and $\varphi(uH)$ are $i$-adjacent flags of 
$\po$ and hence are joined by an edge of $\mathcal{G}$ of color $i$, the image of $\{uH,vH\}$ under $\varphi$. Moreover, by  the connectedness properties of $\po$, each edge of $\cal G$ of color $i$ is the image of an edge of ${\cal M}_H$ of color $i$ under~$\varphi$. Thus $\varphi$ is a graph isomorphism. 

Very similar arguments also complete the proof that the edge coloring of $\mathcal{M}_H$ inherited from $\mathcal{M}$ is indeed well-defined. In fact, if $u',u'',v',v''$ as above are such that $v'=s_{i}u'$ and $v''=s_{j}u''$ in $\Mon(\po)$ for some $i$ and $j$, then the corresponding equations in (\ref{bijedge}), first applied with $u',v'$ and then with $u'',v''$, show that $\varphi(vH)$ and $\varphi(uH)$ are a pair of flags of $\po$ that are both $i$-adjacent and $j$-adjacent, and hence that $j=i$.

Thus $\mathcal{M}_H$ and $\mathcal{G}$ are isomorphic, and $\mathcal{M}$ is an imprimitive cover of $\mathcal{G}$ associated with $H$.
\end{proof}

Recall that a surjective mapping $\gamma:\mathcal{Q}\rightarrow\mathcal{R}$ between two polytopes $\mathcal{Q}$ and $\mathcal{R}$ of the same rank is called a {\em covering\/} if $\gamma$ preserves incidence of faces in one direction (incidence in $\mathcal{Q}$ implies incidence of images in $\mathcal{R}$), ranks of faces, and adjacency of flags (see \cite[p. 43]{McMS02}). If there exists such a covering $\gamma:\mathcal{Q}\rightarrow\mathcal{R}$ for polytopes $\mathcal{Q}$ and $\mathcal{R}$, then we also say that $\mathcal{Q}$ is a {\em covering\/} of $\mathcal{R}$. 

\begin{theorem}
Let $\po$ be a polytope of rank $n$, let $\cal G$ be the flag graph of $\po$, and let $\cal M$ be the Cayley graph of the monodromy group $\Mon(\po)$ of $\po$ with canonical generators~$s_0,\dots s_{n-1}$. Then the monodromy polytope $\po_{\mathcal{M}}$ of $\po$ is a covering of the flag adjacency polytope $\po_{\mathcal{G}}$ of $\po$. In particular, $\po_{\mathcal{M}}$ and $\po_{\mathcal{G}}$ are isomorphic if and only if $\po$ is a regular polytope.
\end{theorem}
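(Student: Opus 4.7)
The plan is to lift the color-preserving graph covering $\varphi:\mathcal{M}\to\mathcal{M}_H\cong\mathcal{G}$ produced in Proposition~\ref{covgr} to a polytope covering $\gamma:\po_{\mathcal{M}}\to\po_{\mathcal{G}}$. Fix a base flag $\Phi$ of $\po$, let $H$ be its stabilizer in $\Mon(\po)$, and define
\[ \gamma\bigl((C,u)\bigr) := \bigl(C,\,\Phi\cdot u^{-1}\bigr), \]
where on the left $(C,u)$ denotes the typical face of $\po_{\mathcal{M}}$ with $u\in\Mon(\po)$ and $C\subseteq \{0,\ldots,n-1\}$, and on the right $\Phi\cdot u^{-1}\in\fl(\po)$ is a vertex of $\mathcal{G}$. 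Note that both colorful polytopes use the same color set $R=\{0,\ldots,n-1\}$, so the first coordinate is transported without change.

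First I would check that $\gamma$ is well defined. If $(C,u)=(C,u')$ in $\po_{\mathcal{M}}$, then $u$ and $u'$ are joined in $\mathcal{M}$ by an edge path using only colors in $C$. Applying $\varphi$ (which is color preserving by Proposition~\ref{covgr}) to this path yields an edge path in $\mathcal{G}$ between $\Phi\cdot u^{-1}$ and $\Phi\cdot u'^{-1}$ using exactly the same colors, so $(C,\Phi\cdot u^{-1})=(C,\Phi\cdot u'^{-1})$ in $\po_{\mathcal{G}}$. The same argument shows that $\gamma$ preserves incidence and rank: if $(C,u)\le(D,v)$ in $\po_{\mathcal{M}}$, then $C\subseteq D$ and $u,v$ are joined by an edge path in $\mathcal{M}$ of colors from $D$, and the image path in $\mathcal{G}$ witnesses $(C,\Phi\cdot u^{-1})\le(D,\Phi\cdot v^{-1})$ in $\po_{\mathcal{G}}$. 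Surjectivity on faces follows from the transitivity of $\Mon(\po)$ on $\fl(\po)$: any face $(C,\Psi)$ of $\po_{\mathcal{G}}$ is hit by $(C,u)$ whenever $\Psi=\Phi\cdot u^{-1}$.

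Next I would verify that $\gamma$ preserves adjacency of flags. A flag of $\po_{\mathcal{M}}$ has the form $(\mathcal{C},u)=\{(C_0,u),\ldots,(C_n,u)\}$, and is sent by $\gamma$ to the flag $(\mathcal{C},\Phi\cdot u^{-1})$ of $\po_{\mathcal{G}}$. For the $j$-adjacent flag with $j\ge 1$, only the set $C_j$ changes, so the image is again $j$-adjacent. For $j=0$, the $0$-adjacent flag is $(\mathcal{C},w)$ with $w=s_iu$ in $\Mon(\po)$ where $C_1=\{i\}$; the edge $\{u,w\}$ of color $i$ in $\mathcal{M}$ maps under $\varphi$ to an edge of color $i$ in $\mathcal{G}$ joining $\Phi\cdot u^{-1}$ and $\Phi\cdot w^{-1}=(\Phi\cdot u^{-1})^{i}$, so the images are $0$-adjacent. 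Thus $\gamma$ is a covering in the sense of \cite[p.~43]{McMS02}.

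For the ``iff'' clause, I would argue both directions from the vertex map $u\mapsto \Phi\cdot u^{-1}$. If $\po$ is regular then $\Mon(\po)$ acts freely on $\fl(\po)$, so this vertex map is a bijection, $\varphi$ is a color-preserving isomorphism of $\mathcal{M}$ and $\mathcal{G}$ by Proposition~\ref{covgr}, and hence $\gamma$ becomes an isomorphism of colorful polytopes (since the face structure of a colorful polytope is determined by its properly edge colored $1$-skeleton). Conversely, if $\gamma$ is an isomorphism then it is bijective on $0$-faces, so $u\mapsto\Phi\cdot u^{-1}$ is injective, forcing $H=\{\varepsilon\}$ and hence a free action of $\Mon(\po)$ on $\fl(\po)$; by the characterization of regularity via the monodromy group (see \cite{hartley}), $\po$ is regular.

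The main obstacle I anticipate is the bookkeeping around well-definedness, since a face of a colorful polytope admits many representations $(C,u)$: one must repeatedly invoke the color-preserving property of $\varphi$ to transport equivalence classes under $\sim_C$ from $\mathcal{M}$ to $\mathcal{G}$. Once this is in hand, rank, incidence, flag adjacency and surjectivity all follow by essentially the same path-lifting argument.
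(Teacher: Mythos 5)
Your proposal is correct and follows essentially the same route as the paper: both lift the graph covering of Proposition~\ref{covgr} to the face level by transporting the second coordinate (the paper writes $(C,u)\mapsto(C,uH)$ and then identifies $\po_{\mathcal{M}_H}$ with $\po_{\mathcal{G}}$, while you compose with the isomorphism and write $(C,u)\mapsto(C,\Phi\cdot u^{-1})$ directly), and both reduce the ``if and only if'' clause to the graph-level statement of that proposition. The only nitpick is that in the converse direction you should start from the hypothesis that the two polytopes are isomorphic (hence have equally many vertices, so the surjection $u\mapsto\Phi\cdot u^{-1}$ between finite sets of equal size is a bijection) rather than assuming your particular $\gamma$ is an isomorphism.
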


\begin{proof} 
We know from Proposition~\ref{covgr} and its proof that $\mathcal{G}$ is isomorphic to the quotient graph $\mathcal{M}_H$ of $\mathcal{M}$ defined by the subgroup $H$ of $\Mon(\po)$ described earlier. Hence the colorful polytopes $\po_{\mathcal{G}}$ of $\mathcal{G}$ and $\po_{\mathcal{M}_H}$ of $\mathcal{M}_H$ are isomorphic. 

Consider the graph covering $\gamma: \mathcal{M}\rightarrow \mathcal{M}_H$ defined (on the vertex-sets) by $\gamma(u):=uH$ for $u\in\Mon(\po)$. Then $\gamma$ is color preserving, meaning that an edge of $\mathcal{M}$ is mapped to an edge of $\mathcal{M}_H$ of the same color. This graph covering extends in a natural way to a covering, again denoted by $\gamma$, between the colorful polytopes of these two graphs. More specifically, $\gamma: \po_{\mathcal{M}}\rightarrow \po_{\mathcal{M}_H}$ is given by 
\[ \gamma((C,u)) := (C,uH) \qquad (C\subseteq R,\, u\in \Mon(\po)) ,\]
with $R:= \{0,\ldots,n-1\}$. Then $\gamma$ is well-defined and preserves incidence in one direction. In fact, if $C,D\subseteq R$, $u,v\in \Mon(\po)$, and $(C,u)\leq (D,v)$ in $\po_{\mathcal{M}}$, then $C\subseteq D$ and $u,v$ can be joined in $\mathcal{M}$ by a path of edges with colors from $D$; but then $uH,vH$ can be joined by a path in $\mathcal{M}_H$ with edge labels from $D$, so $(C,uH)\leq (D,vH)$ in $\po_{\mathcal{M}_H}$. In particular, when $C=D$ this proves that $\gamma$ is well-defined. Moreover, $\gamma$ is surjective, rank preserving, and flag adjacency preserving. For the latter observe that 
$\gamma$ takes a flag $\Psi = (\mathcal{C},u)$ of $\po_{\mathcal{M}}$, with $u\in \Mon(\po)$ and $\mathcal{C}:= \{C_{0},C_{1},\ldots,C_{n}\}$ a maximal nested family of subsets of $R$, to the flag $\gamma(\Psi) = (\mathcal{C},uH)$ of $\po_{\mathcal{M}_H}$. Thus $\gamma: \po_{\mathcal{M}}\rightarrow \po_{\mathcal{M}_H}$ is a covering of polytopes.

Finally, being colorful polytopes, $\po_{\mathcal{M}}$ and $\po_{\mathcal{M}_H}$ are isomorphic if and only if their $1$-skeletons are isomorphic edge-colored graphs. By Proposition~\ref{covgr} we know this to happen if and only if $\po$ is a regular polytope.
\end{proof}
\bigskip

\noindent
{\bf Acknowledgment:} We are very grateful to Javier Bracho and Luis Montejano for bringing their article~\cite{bracho} on colored triangulations of manifolds to our attention.
\bigskip


\begin{thebibliography}{99}

\bibitem{graph} G.~Araujo-Pardo, M.~Del R\'io-Francos, M.~L\'opez-Dudet, D.~Oliveros and
E.~Schulte,  {\em The graphicahedron\/}, European Journal of Combinatorics {\bf 31}~(2010), 1868--1879.

\bibitem{BlindMan87} R.~Blind and P.~Mani, {\em On puzzles and polytope isomorphism\/}, Aequationes Math. {\bf 34} (1987), 287--297.

\bibitem{bracho} J.~Bracho and L.~Montejano, {\em The combinatorics of colored triangulations of manifolds}, Geom.\ Dedicata {\bf 22} (1987), 303--328.

\bibitem{CL96} G.~Chartrand and L.~Lesniak, {\em Graphs and Digraphs, Third Edition\/}, Chapman and Hall, London, UK, 1996.

\bibitem{RP} H.S.M.~Coxeter, {\em Regular Polytopes, Third  Edition\/}, Dover, New York, 1973.

\bibitem{ds} L.~Danzer and E.~Schulte, {\em Regul\"are Inzidenzkomplexe, I\/}, Geom.\ Dedicata {\bf 13} (1982), 295--308.

\bibitem{symgra} M.~Del R\'io-Francos, I.~Hubard, D.~Oliveros and E.~Schulte, {\em Symmetric graphicahedra}, 
submitted.

\bibitem{doignon} J.P.~Doignon and C.~Huybrechts, {\em Permutographs and the permutahedron\/}, manuscript, 2000.

\bibitem{ErdosWilson} P.~Erd\"os and R.J.~Wilson, {\em On the chromatic index of almost all graphs}, J.~Combin.  Theory, Ser.~B, {\bf 23} (1977), 255--257.

\bibitem{ferlee} M.E.~Fernandes and D.~Leemans, {\em Polytopes of high rank for the symmetric groups}, Advances in Mathematics, to appear.

\bibitem{fgg} M.~Ferri, C.~Gagliardi \& L.~Graselli, {\em A graph-theoretical representation of PL-manifolds --- a survey on crystallizations\/}, Aequationes Mathematicae {\bf 31} (1986), 121--141.

\bibitem{praeger} A.~Gardiner and C.E.~Praeger, {\em A geometrical approach to imprimitive graphs\/}, Proc. London Math. Soc. (3) {\bf 71} (1995), 524--546.

\bibitem{grgcd} B.~Gr\"unbaum, {\em Regularity of graphs, complexes and designs\/}, In: Probl\`{e}mes combinatoires et th\'{e}orie des graphes, Coll.\ Int.\ C.N.R.S.\ {\bf 260}, Orsey (1977), 191--197.

\bibitem{guro} G.-Th.~Guilbaud, P.~Rosenstiehl, {\em Analyse alg\'{e}brique d'un scrutin\/},
Math\'{e}matiques et Sciences Humaines {\bf 4} (1963), 9--33.

\bibitem{hartley} M.I.~Hartley, {\em All polytopes are quotients, and isomorphic polytopes are quotients by conjugate subgroups\/}, Discrete Comput. Geom. {\bf 21} (1999), 289--298.

%\bibitem{isa} I.~Hubard, {\em From Geometry to Groups and Back: the Study of Highly Symmetric Polytopes}, Doctoral Dissertation, York University, Toronto, Canada, 2007.

\bibitem{d-auto} I.~Hubard, A.~Orbanic and A.I.~Weiss, {\em Monodromy groups and self-invariance}, Canadian Journal of Mathematics {\bf 61} (2009), 1300--1324.

\bibitem{cayley} R.~Jajcay, {\em The Structure of Automorphism Groups of Cayley Graphs and Maps.},
J. Algebraic Combin, {\bf 12} (2000), 73--84.

\bibitem{kalai} G.~Kalai, {\em A simple way to tell a simple polytope form its graph\/}, J. Combinatorial Theory, Ser. A, {\bf 49} (1988), 381--383. 

\bibitem{kuhn} W.~K\"uhnel, {\em Triangulations of manifolds with few vertices}, Advances in differential geometry and topology, 
59--114, World Sci. Publ., Teaneck, NJ, 1990. 

\bibitem{lima} S.~Lins and A.~Mandel, {\em Graph-encoded $3$-manifolds\/}, Discrete Mathematics {\bf 57} (1985), 261--284.

%\bibitem{graphsandgroups} C.H.~Li and C.~Schneider, Lecture notes of the course 'Graphs and groups'. University of Western Australia 2001. http://www.sztaki.hu/~schneider/Teaching/4P4/notes.html

\bibitem{McMS02} P.~McMullen and E.~Schulte, {\em Abstract Regular Polytopes\/}, Cambridge University Press, 2002.

\bibitem{medlay} B.~Monson and A.~Ivi\'{c}~Weiss, {\em Medial layer graphs of equivelar 4-polytopes\/},  European J. of Combin. {\bf 28} (2007), 43--60.

\bibitem{mow} B.R.~Monson and A.~Ivi\'{c}~Weiss, {\em Cayley graphs and symmetric $4$-polytopes\/}, Ars Mathematica Contemporanea {\bf 1} (2008), 185--205.

\bibitem{semsym} B.~Monson, T.~Pisanski, E.~Schulte and A.I.~Weiss, {\em Semisymmetric graphs from polytopes\/}, J. Combin. Theory, Ser.~A, {\bf 114} (2007), 421--435.

\bibitem{korbit} A.~Orbanic, D.~Pellicer, A.I.~Weiss, {\em Map operation and $k$-orbit maps}, J.\ Combin.\ Theory, Ser.~A, {\bf 117} (2010), 411--429.

\bibitem{pez} M.~Pezzana, {\em Sulla struttura topologica delle varit\`{a} compatte\/}, Atti Sem. Mat. Fis. Univ. Modena {\bf 23} (1974), 269--277.

\bibitem{psw} T.~Pisanski, E.~Schulte and A.I.~Weiss, {\em On the size of equifacetted semi-regular polytopes\/}, preprint.

\bibitem{S11} P.H.~Schoute, {\em Analytic treatment of the polytopes regulary derived from the regular polytopes\/},  Verhandelingen der Koninklijke Akademie van Wetwenschappen te Amsterdam {\bf 11}, No. 3, Johannes M\"uller, Amsterdam, 1911, 87~pages. 

\bibitem{esch} E.~Schulte, {\em Regul\"are Inzidenzkomplexe, II\/}, Geom.\ Dedicata {\bf 14} (1983), 33--56.

\bibitem{vin83a} A.~Vince, {\em Combinatorial maps\/}, J.\ Combin.\ Theory Ser.~B, {\bf 34} (1983), 1--21.  

\bibitem{vin83b} A.~Vince, {\em Regular combinatorial maps\/}, J.\ Combin.\ Theory, Ser.~B, {\bf 35} (1983), 256--277.

\bibitem{Vizing} V.G.~Vizing, {\em On and estimate of the chromatic class of a $p$-graph}, Diskret. Analiz. {\bf 3} (1964), 25--30.

\bibitem{Vizing2} V.G.~Vizing, {\em Critical graphs with a given chromatic class}, Diskret. Analiz. {\bf 5} (1965), 9--17.

%\bibitem{White87} N.~White, {\em Combinatorial Geometries\/}, Cambridge University Press, 1987.

\bibitem{Z95} G.~Ziegler, {\em Lectures on Polytopes\/}, Springer-Verlag, New York, 1994.

\end{thebibliography}
\end{document}